\begin{document}

\newtheorem{theorem}[subsection]{Theorem}
\newtheorem{proposition}[subsection]{Proposition}
\newtheorem{lemma}[subsection]{Lemma}
\newtheorem{corollary}[subsection]{Corollary}
\newtheorem{conjecture}[subsection]{Conjecture}
\newtheorem{prop}[subsection]{Proposition}
\numberwithin{equation}{section}
\newcommand{\mr}{\ensuremath{\mathbb R}}
\newcommand{\mc}{\ensuremath{\mathbb C}}
\newcommand{\dif}{\mathrm{d}}
\newcommand{\intz}{\mathbb{Z}}
\newcommand{\ratq}{\mathbb{Q}}
\newcommand{\natn}{\mathbb{N}}
\newcommand{\comc}{\mathbb{C}}
\newcommand{\rear}{\mathbb{R}}
\newcommand{\prip}{\mathbb{P}}
\newcommand{\uph}{\mathbb{H}}
\newcommand{\fief}{\mathbb{F}}
\newcommand{\majorarc}{\mathfrak{M}}
\newcommand{\minorarc}{\mathfrak{m}}
\newcommand{\sings}{\mathfrak{S}}
\newcommand{\fA}{\ensuremath{\mathfrak A}}
\newcommand{\mn}{\ensuremath{\mathbb N}}
\newcommand{\mq}{\ensuremath{\mathbb Q}}
\newcommand{\half}{\tfrac{1}{2}}
\newcommand{\f}{f\times \chi}
\newcommand{\summ}{\mathop{{\sum}^{\star}}}
\newcommand{\chiq}{\chi \bmod q}
\newcommand{\chidb}{\chi \bmod db}
\newcommand{\chid}{\chi \bmod d}
\newcommand{\sym}{\text{sym}^2}
\newcommand{\hhalf}{\tfrac{1}{2}}
\newcommand{\sumstar}{\sideset{}{^*}\sum}
\newcommand{\sumprime}{\sideset{}{'}\sum}
\newcommand{\sumprimeprime}{\sideset{}{''}\sum}
\newcommand{\sumflat}{\sideset{}{^\flat}\sum}
\newcommand{\shortmod}{\ensuremath{\negthickspace \negthickspace \negthickspace \pmod}}
\newcommand{\V}{V\left(\frac{nm}{q^2}\right)}
\newcommand{\sumi}{\mathop{{\sum}^{\dagger}}}
\newcommand{\mz}{\ensuremath{\mathbb Z}}
\newcommand{\leg}[2]{\left(\frac{#1}{#2}\right)}
\newcommand{\muK}{\mu_{\omega}}
\newcommand{\thalf}{\tfrac12}
\newcommand{\lp}{\left(}
\newcommand{\rp}{\right)}
\newcommand{\Lam}{\Lambda_{[i]}}
\newcommand{\lam}{\lambda}
\def\L{\fracwithdelims}
\def\om{\omega}
\def\pbar{\overline{\psi}}
\def\phis{\varphi^*}
\def\lam{\lambda}
\def\lbar{\overline{\lambda}}
\newcommand\Sum{\Cal S}
\def\Lam{\Lambda}
\newcommand{\sumtt}{\underset{(d,2)=1}{{\sum}^*}}
\newcommand{\sumt}{\underset{(d,2)=1}{\sum \nolimits^{*}} \widetilde w\left( \frac dX \right) }

\newcommand{\hf}{\tfrac{1}{2}}
\newcommand{\af}{\mathfrak{a}}
\newcommand{\Wf}{\mathcal{W}}

\theoremstyle{plain}
\newtheorem{conj}{Conjecture}
\newtheorem{remark}[subsection]{Remark}

\makeatletter
\def\widebreve{\mathpalette\wide@breve}
\def\wide@breve#1#2{\sbox\z@{$#1#2$}%
     \mathop{\vbox{\m@th\ialign{##\crcr
\kern0.08em\brevefill#1{0.8\wd\z@}\crcr\noalign{\nointerlineskip}%
                    $\hss#1#2\hss$\crcr}}}\limits}
\def\brevefill#1#2{$\m@th\sbox\tw@{$#1($}%
  \hss\resizebox{#2}{\wd\tw@}{\rotatebox[origin=c]{90}{\upshape(}}\hss$}
\makeatletter

\title[Upper bounds for shifted moments of Dirichlet $L$-functions to a fixed modulus over function fields]{Upper bounds for shifted moments of Dirichlet $L$-functions to a fixed modulus over function fields}

%%\date{\today}
\author[S. Baier] {Stephan Baier}
\address{Stephan Baier\\
	Ramakrishna Mission Vivekananda Educational and Research Institute\\
	Department of Mathematics\\
	G.\ T.\ Road, PO~Belur Math, Howrah, West Bengal~711202\\
	India}
\email{stephanbaier2017@gmail.com}

\author[P. Gao]{Peng Gao}
\address{School of Mathematical Sciences, Beihang University, Beijing 100191, China}
\email{penggao@buaa.edu.cn}

\begin{abstract}
 In this paper, we establish sharp upper bounds on shifted moments of the family of Dirichlet $L$-functions to a fixed modulus over function fields. We apply the result to obtain upper bounds on moments of Dirichlet character sums over function fields. 
\end{abstract}

\maketitle

\noindent {\bf Mathematics Subject Classification (2010)}: 11M38, 11R59, 11T06   \newline

\noindent {\bf Keywords}: character sums, Dirichlet $L$-functions, function fields, shifted moments, upper bounds

\section{Introduction}
\label{sec 1}

  It is an important subject to study the moments of families of $L$-functions in number theory as they have many significant applications. Although it is a challenge to evaluate higher moments in general, there are now many conjectured asymptotic formulas as well as sharp lower and upper bounds concerning the moments. For example, a remarkable method of K. Soundararajan \cite{Sound2009} as well as its refinement by A. J. Harper  \cite{Harper} can be applied to establish sharp upper bounds for these moments under the assumption of the generalized Riemann hypothesis (GRH).
  
  The family of Dirichlet $L$-functions to a fixed modulus is a typical family of interest with its moments being extensively investigated in the literature, especially in the number fields setting.  In \cite{Munsch17}, M. Munsch adapted the method in \cite{Sound2009} to show that under GRH, for a large fixed modulus $q$, any  positive integer $2k$, and real numbers $t_j$ such that $t_j \ll \log q$, 
\begin{equation}
  \label{munsch1}
\sum_{\chi\in X_q^*}|L(1/2+it_1,\chi) \cdot L(1/2+it_2, \chi)\cdots L(1/2+it_{2k},\chi)|\ll_{\epsilon, k} \varphi(q) (\log q)^{k/2+\varepsilon} \prod_{1\leq j<l\leq 2k} \tilde g^{1/2}(|t_j-t_l|), 
\end{equation}
 where $X_q^*$ denotes the set of primitive Dirichlet characters modulo $q$, $\varphi$ is the Euler totient function, and where $\tilde g:\mathbb{R}_{\geq 0} \rightarrow \mathbb{R}$ is the function defined by
$$\tilde g(x) =\begin{cases}
\min (\frac{1}{x}, \log q)  & \text{if } x\leq \frac{1}{100}, \\
\log \log q & \text{if } x\geq \frac{1}{100}.
\end{cases}
$$
  Here we point out that one may regard $\tilde g$ as a correlation factor that measures how the values of the $L$-functions correlate to each other at various points on the critical line.  In \cite{Szab}, B. Szab\'o applied the method \cite{Harper} to improve the estimate given in \eqref{munsch1} by removing the $(\log q)^{\varepsilon}$ factor, extending it to hold for $t_j \ll q^C$ for any fixed real number $C$ and replacing $\tilde g$ by a function $g(x)$ given by
$$g(x) =\begin{cases}
\log q  & \text{if } x\leq \frac{1}{\log q} \text{ or } x\geq e^q, \\
\frac{1}{x} & \text{if }   \frac{1}{\log q}< x< 10, \\
\log \log x & \text{if } 10\leq x\leq e^q.
\end{cases}
$$
  The same type of upper bound was obtained by M. J. Curran  \cite{Curran} for shifted moments of the Riemann zeta function, using similar approaches. We point out here that the correlation factor in the work of Szab\'o  can be taken (see \cite[Lemma 2]{Szab}) to be $g(t)=|\zeta(1+1/\log q+it)|$, where $\zeta(s)$ is the Riemann zeta function. This is consistent with the correlation factor appearing in the work of Curran. 
 
  Compared to values of $L$-functions on arbitrary points on the critical line, much more attention has been paid to central values of $L$-functions in the literature, due to rich arithmetic meanings they carry. The works of Szab\'o and Curran imply that it is also meaningful to study values of $L$-functions off the central point, as they may be used to determine the symmetry type of the corresponding family of $L$-functions. Moreover, the shifted moments can be applied to achieve bounds for moments of character sums, as done in \cites{Szab, G&Zhao2024, Gao2024}.
 
  It is the aim of this paper is to extend the above mentioned result of Szab\'o to the function fields setting and then apply it to obtain similar bounds for moments of character sums.  To state our results, we denote by $A=\mathbb{F}_{q}[T]$ the polynomial ring over $\mathbb{F}_{q}$, where $\mathbb{F}_{q}$ is a fixed finite field of cardinality $q$. We denote by $d(f)$ the degree of any $f \in A$, and define the norm $|f|$ to be $|f|=q^{d(f)}$ for $f\neq 0$ and $|f|=0$ for $f=0$.  Let $\chi$ be a Dirichlet character modulo $Q$ defined in Section~\ref{sec 2} for a fixed polynomial $Q \in A$ of degree larger than $1$ and let $L(s,\chi)$ be the Dirichlet $L$-function associated to $\chi$. Denote by $X_Q^*$ the set of primitive characters modulo $Q$ and $\varphi(Q)$ for the Euler totient function on $A$.  Our first result concerns upper bounds for shifted moments of the family of these Dirichlet $L$-functions, which is analogue to \cite[Theorem 1]{Szab}.
\begin{theorem}
\label{t1}
Keep the notations above. Let $k\geq 1$ be a fixed integer and $a_1,\ldots, a_{k}$ be fixed positive real numbers. Then
for any real $k$-tuple $t=(t_1,\ldots ,t_{k})$, we have
$$\sum_{\chi \in X_Q^*} \big| L\big(1/2+it_1,\chi \big) \big|^{a_1} \cdots \big| L\big(1/2+it_{k},\chi \big) \big|^{a_{k}}\ll \varphi(Q)(\log |Q|)^{(a_1^2+\cdots +a_{k}^2)/4} \prod_{1\leq j<l\leq k}  \big|\zeta_A(1+i(t_j-t_l)+\frac 1{\log |Q|}) \big|^{a_ja_l/2},  $$
 where $L(s, \chi)$ is the $L$-function associated to $\chi$ defined in \eqref{Ldef} and where $\zeta_A(s)$ is the zeta function associated to $A$ defined in \eqref{zetadef}. 

  Consequently, we have
$$\sum_{\chi \in X_Q^*} \big| L\big(1/2+it_1,\chi \big) \big|^{a_1} \cdots \big| L\big(1/2+it_{k},\chi \big) \big|^{a_{k}}\ll \varphi(Q)(\log |Q|)^{(a_1^2+\cdots +a_{k}^2)/4} \prod_{1\leq j<l\leq k} (\min (\log |Q|, \frac {1}{\overline{\log q |t_i-t_j|}}))^{a_ja_l/2},  $$
   where we define $\overline {\theta}=\min_{n \in \mz} |\theta-2\pi n|$ for any real $\theta$.

   The implied constants in the above estimations depend on $k$, $a_j$ but not on $Q$ or the $t_j$.
\end{theorem}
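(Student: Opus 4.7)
The plan is to follow the Soundararajan--Harper method as implemented in Szab\'o's work \cite{Szab}, adapting each step to the function field setting. First I would establish a function-field analogue of the key upper bound
\begin{equation*}
\log|L(1/2+it,\chi)| \leq \mathrm{Re}\sum_{\substack{P\text{ monic irred.}\\ d(P)\leq x}} \frac{\chi(P)\, w(d(P))}{|P|^{1/2+it}} + O\!\left(\frac{d(Q)}{x}\right)
\end{equation*}
for a smooth weight $w$ and a parameter $x$ to be chosen, using the Hadamard factorisation of $L(s,\chi)$ together with the Riemann Hypothesis for Dirichlet $L$-functions over $A$, which holds unconditionally. This plays the role of Soundararajan's inequality.

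Next, following Harper's refinement, I would partition the prime degrees in $[1,x]$ into geometrically spaced blocks $I_1,\dots,I_J$ and, for each $\ell$, define the class $\mathcal{S}(\ell)\subset X_Q^*$ consisting of characters for which the partial sum over the blocks $I_1\cup\cdots\cup I_j$ first exceeds a level proportional to the length of $I_\ell$ at $j=\ell$, with $\mathcal{S}(J+1)$ collecting the remaining characters. On each $\mathcal{S}(\ell)$, H\"older's inequality allows replacing $|L(1/2+it_j,\chi)|^{a_j}$ by an even moment $2m_j$ of a Dirichlet polynomial on each block, chosen so that the total degree appearing after expansion stays below $d(Q)$; this will guarantee that primitive character orthogonality produces only diagonal contributions.

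The central step is the mean value estimate
\begin{equation*}
\sum_{\chi\in X_Q^*} \Big|\sum_{\substack{P\\ d(P)\in I_\ell}} \frac{c_\ell(P)\chi(P)}{|P|^{1/2}}\Big|^{2M_\ell}
\ll \varphi(Q)\,\frac{(2M_\ell)!}{M_\ell!\,2^{M_\ell}}\Big(\sum_{\substack{P\\ d(P)\in I_\ell}}\frac{|c_\ell(P)|^2}{|P|}\Big)^{M_\ell}
\end{equation*}
with $c_\ell(P)=\sum_j a_j |P|^{-it_j}$ (and the truncation weight absorbed). Exponentiating and summing over classes and blocks, and applying the function-field prime number theorem $\sum_{d(P)\leq y}|P|^{-1}=\log y + O(1)$, the $j=l$ diagonal pieces of $|c_\ell(P)|^2$ assemble into the $(\log|Q|)^{(a_1^2+\cdots+a_{2k}^2)/4}$ factor (with the quarter coming from the Gaussian-type identity $\mathrm{Var}(\mathrm{Re}X)=\tfrac12\mathrm{Var}(X)$ combined with $\mathbf{E}[e^X]=e^{\mathrm{Var}/2}$), while the off-diagonal pieces yield
\begin{equation*}
\sum_{\substack{P\\ d(P)\leq x}} \frac{1}{|P|^{1+i(t_j-t_l)}} = \log\big|\zeta_A(1+i(t_j-t_l)+1/\log|Q|)\big| + O(1),
\end{equation*}
by comparison with the Euler product $\zeta_A(s)=\prod_P(1-|P|^{-s})^{-1}=(1-q^{1-s})^{-1}$; the factor $1/2$ in the exponent $a_ja_l/2$ arises because each unordered pair $\{j,l\}$ appears twice in the diagonal of the square. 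This proves the first bound.

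The main obstacle will be executing the primitive-orthogonality and Hadamard bookkeeping over $A$ sharply enough: one must check that $x$ can be taken large enough so that the error $\varphi(Q) d(Q)/x$ is absorbed, while the moment-length condition $2M_\ell\cdot\max I_\ell < d(Q)$ still holds on every block, and one has to handle imprimitive contributions via M\"obius inversion over divisors of $Q$. The ``consequently'' statement is then purely elementary: from $\zeta_A(s)=(1-q^{1-s})^{-1}$ we have
\begin{equation*}
|\zeta_A(1+i\tau+1/\log|Q|)| = \frac{1}{|1-q^{-i\tau-1/\log|Q|}|} \asymp \min\!\Big(\log|Q|,\ \frac{1}{\overline{(\log q)\tau}}\Big),
\end{equation*}
since $|1-e^{-\eta+i\theta}|\asymp\max(\eta,|\overline\theta|)$ for small $\eta\geq 0$; taking $\tau=t_j-t_l$ yields the second displayed bound.
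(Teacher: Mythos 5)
Your proposal follows the Soundararajan--Harper strategy in the form implemented by Szab\'o, which is exactly the route the paper takes: the pointwise bound for $\log|L(1/2+it,\chi)|$ in terms of a weighted prime Dirichlet polynomial (Proposition~\ref{prop-ub} in the paper, obtained via the polynomial structure of $L(s,\chi)$ over $A$, i.e.\ the function-field version of ``Hadamard + RH'' you invoke), Harper's geometric block decomposition of the primes together with the exceptional sets $\mathcal{T}$ and $\mathcal{S}(j)$, the even-moment mean-value estimate via primitive-character orthogonality, and the comparison $\sum_{|P|\le x}\cos(\alpha\log|P|)/|P|=\log|\zeta_A(1+1/\log x+i\alpha)|+O(1)$ from Lemma~\ref{mertenstype}. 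The passage to the explicit $\min(\log|Q|,1/\overline{(\log q)\tau})$ form via $\zeta_A(s)=(1-q^{1-s})^{-1}$ is also how the paper proceeds. The only small slips are notational: the off-diagonal prime sum should carry a $\Re$ (equivalently a cosine weight), and the shift inside $\zeta_A$ should be $1/\log x$ with $x$ a small fixed power of $|Q|$ (harmless, since this changes things by $O(1)$ inside the logarithm). Overall this is the paper's proof in outline, correctly conceived.
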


   The proof of Theorem \ref{t1} follows closely the proof of \cite[Theorem 1]{Szab}. The special case of $t_j=0$ and $Q$ being prime in Theorem \ref{t1} has been established in \cite[Proposition 1.3]{G&Zhao2024-2}.  We remark here that Theorem \ref{t1} is valid unconditionally as the Riemann hypothesis is true for the function fields case. Moreover, Theorem \ref{t1} holds without any restrictions on $t_j$. In fact,  our result depends on $\overline{\log q |t_i-t_j|}$ instead of $|t_i-t_j|$ since the $L$-functions $L(1/2+it, \chi)$ in the function fields setting are periodic functions of $t$.

  As shown in Section \ref{sec 2.1}, we have $L(s, \chi)=\mathcal{L}(q^{-s},\chi)$, where $\mathcal{L}(q^{-s},\chi)$ is defined in \eqref{Ludef}. We now set $u=q^{-1/2}e^{i\theta}$ to see that
$$ \mathcal{L}(\frac {e^{i\theta}}{\sqrt{q}},\chi) = L\big(\frac 12- \frac {\theta i}{\log q}, \chi \big ).$$

   We apply the above to obtain a restatement of Theorem \ref{t1} in terms of $\mathcal{L}(u,\chi)$.
\begin{corollary}
\label{cor1}
With the notation as in the statement of Theorem \ref{t1}, we have
\begin{align}
\label{mathcalLestimation}
 \sum_{\chi \in X_Q^*} \big| \mathcal{L}(\frac {e^{i\theta_1}}{\sqrt{q}},\chi)\big|^{a_1} \cdots \big| \mathcal{L}(\frac {e^{i\theta_{k}}}{\sqrt{q}},\chi) \big|^{a_{k}}\ll \varphi(Q)(\log |Q|)^{(a_1^2+\cdots +a_{k}^2)/4} \prod_{1\leq j<l\leq k} ( \min (\log |Q|, \frac {1}{\overline{|\theta_j-\theta_l|}}))^{a_ja_l/2}.
\end{align}
 Here the implied constant depends on $k$, $a_j$ but not on $Q$ or the $\theta_j$.
\end{corollary}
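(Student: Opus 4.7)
The plan is to derive Corollary \ref{cor1} directly from the second assertion of Theorem \ref{t1} via the change of variables indicated in the paragraph preceding the statement. Using the identity $\mathcal{L}(e^{i\theta}/\sqrt{q},\chi) = L(1/2 - i\theta/\log q, \chi)$, I would set $t_j := -\theta_j/\log q$ for each $j = 1, \ldots, 2k$, so that $|\mathcal{L}(e^{i\theta_j}/\sqrt{q},\chi)|^{a_j} = |L(1/2 + it_j, \chi)|^{a_j}$. With this identification, the left-hand side of \eqref{mathcalLestimation} is exactly the sum bounded by the second estimate of Theorem \ref{t1} at the tuple $t = (t_1, \ldots, t_{2k})$, and the main factor $\varphi(Q)(\log|Q|)^{(a_1^2 + \cdots + a_{2k}^2)/4}$ carries over without modification.

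Next I translate the correlation factors. Under the substitution one has $\log q \cdot (t_j - t_l) = -(\theta_j - \theta_l)$, so $\log q \cdot |t_j - t_l| = |\theta_j - \theta_l|$, and since $\overline{\phantom{x}}$ depends only on its argument modulo $2\pi$,
\[
\overline{\log q\, |t_j - t_l|} \;=\; \overline{|\theta_j - \theta_l|}.
\]
Consequently each factor $\min\bigl(\log|Q|,\, 1/\overline{\log q|t_j - t_l|}\bigr)^{a_j a_l/2}$ appearing in Theorem \ref{t1} becomes $\min\bigl(\log|Q|,\, 1/\overline{|\theta_j - \theta_l|}\bigr)^{a_j a_l/2}$, matching the right-hand side of \eqref{mathcalLestimation} factor-by-factor. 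Since Theorem \ref{t1} is uniform in $Q$ and in the tuple $t$, with implied constant depending only on $k$ and the exponents $a_j$, the same uniformity transfers verbatim to the corollary in $Q$ and $(\theta_1, \ldots, \theta_{2k})$.

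There is no substantive obstacle in this argument: all the genuine analytic work has been carried out in the proof of Theorem \ref{t1}, and Corollary \ref{cor1} is obtained by the explicit one-to-one change of variables $\theta_j \leftrightarrow -t_j \log q$ described above. The only minor point to keep in mind is the periodicity of $L(1/2+it, \chi)$ in $t$ (with period $2\pi/\log q$), which is precisely what makes the passage from $|t_j - t_l|$ in Theorem \ref{t1} to $\overline{|\theta_j - \theta_l|}$ in Corollary \ref{cor1} natural and consistent.
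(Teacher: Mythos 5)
Your proof is correct and is exactly the paper's intended argument: the paper's derivation of the corollary consists of noting the identity $\mathcal{L}(e^{i\theta}/\sqrt{q},\chi) = L(1/2 - i\theta/\log q, \chi)$ and applying the second estimate of Theorem \ref{t1}, which is precisely the substitution $t_j = -\theta_j/\log q$ you carry out. Your translation of the correlation factors (noting $\log q\,|t_j - t_l| = |\theta_j - \theta_l|$ as real numbers, hence equal reductions mod $2\pi$) is the only computation required and is handled cleanly.
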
  
  
  As an application of Theorem \ref{t1}, we next consider estimations for moments of Dirichlet character sums. We define for any real $m \geq 0$, 
\begin{align*}
%%\label{genJacobi}
  S_m(Q, Y)= \sum_{\chi\in X_Q^*}\bigg|\sum_{|f|\leq Y} \chi(f)\bigg|^{2m}. 
\end{align*}  
   Here and throughout the paper, we adapt the convention that when considering a sum over some subset $S$ of $A$, the symbol $\sum_{f \in S}$ stands for a sum over monic $f \in S$, unless otherwise specified.
  
   We apply arguments similar to those used in the proof of \cite[Theorem 3]{Szab} to establish the following bounds on $S_m(Q, Y)$. 
\begin{theorem}
\label{quadraticmean}
With the notation as above, for any real number $m>2$ and large $Y$, we have
\begin{align*}
%%\label{mainestimation}
 S_m(Q,Y)  \ll \varphi(Q)Y^m(\log |Q|)^{(m-1)^2}.
\end{align*}
\end{theorem}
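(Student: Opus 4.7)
The plan is to mimic Szab\'o's strategy in the number-field setting. Since $\mathcal{L}(u,\chi)$ is a polynomial, applying Cauchy's formula to $\mathcal{L}(u,\chi)/((1-u)u^{N+1})$ on the circle $|u|=q^{-1/2}$ with $N:=\lfloor\log_q Y\rfloor$ (and noting that $\mathcal{L}(u,\chi)/(1-u)$ is holomorphic inside $|u|<1$) represents the character sum as
\begin{align*}
I(\chi):=\sum_{|f|\le Y}\chi(f)=\frac{1}{2\pi}\int_0^{2\pi}\mathcal{L}(q^{-1/2}e^{i\theta},\chi)\,K(\theta)\,d\theta,
\qquad K(\theta):=\frac{q^{N/2}e^{-iN\theta}}{1-q^{-1/2}e^{i\theta}},
\end{align*}
and since $q\ge 2$ makes $|1-q^{-1/2}e^{i\theta}|$ uniformly bounded away from zero, one has the uniform estimate $|K(\theta)|\asymp Y^{1/2}$.

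Next, for a positive integer $k\le m$, Jensen's inequality with exponent $s=m/k\ge 1$ (applied with the probability measure $d\theta/(2\pi)$ on $[0,2\pi]$), followed by Fubini, yields
\begin{align*}
|I(\chi)|^{2m}\le \int_{[0,2\pi]^{2k}}\prod_{j=1}^{2k}\big|\mathcal{L}(q^{-1/2}e^{i\theta_j},\chi)\big|^{m/k}\,|K(\theta_j)|^{m/k}\prod_{j=1}^{2k}\frac{d\theta_j}{2\pi}.
\end{align*}
Summing over $\chi\in X_Q^*$, interchanging sum and integral, and applying Corollary \ref{cor1} with the uniform weights $a_1=\cdots=a_{2k}=m/k$ together with $\prod_j|K(\theta_j)|^{m/k}\asymp Y^m$ gives
\begin{align*}
\sum_{\chi\in X_Q^*}|I(\chi)|^{2m}\ll \varphi(Q)Y^m(\log|Q|)^{m^2/(2k)}\int_{[0,2\pi]^{2k}}\prod_{1\le j<l\le 2k}\big(\min(\log|Q|,1/\overline{|\theta_j-\theta_l|})\big)^{m^2/(2k^2)}\prod_{j=1}^{2k}\frac{d\theta_j}{2\pi}.
\end{align*}

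Finally, the multi-dimensional integral is estimated by combining the one-dimensional bound $\int_0^{2\pi}\big(\min(\log|Q|,1/\overline{|\theta|})\big)^{\beta}\,d\theta\ll(\log|Q|)^{\beta-1}$ (valid for $\beta>1$---and it is precisely here that the hypothesis $m>2$ enters, ensuring $m^2/(2k)>1$ when $k\le m$) with a dyadic decomposition of the $\theta_j$'s according to their pairwise separations, exactly as in Szab\'o's argument; this gives an upper bound of $(\log|Q|)^{(2k-1)(m^2/(2k)-1)}$. When $m\in\mathbb N$, the choice $k=m$ makes the combined exponent of $\log|Q|$ equal $\frac{m}{2}+(2m-1)\frac{m-2}{2}=m^2-2m+1=(m-1)^2$, proving the theorem. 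For general real $m>2$, taking $k=\lfloor m\rfloor$ leaves a log-gap of $2(m-\lfloor m\rfloor)$, which is closed by applying Corollary \ref{cor1} with non-uniform weights---splitting the fractional excess $2(m-\lfloor m\rfloor)$ among two auxiliary angles with weight $m-\lfloor m\rfloor$ each---and propagating these inhomogeneous weights through the dyadic decomposition. I expect the main obstacle to be the careful combinatorial bookkeeping in this final multi-dimensional integral, and in particular matching the dyadic layers to the inhomogeneous weights so as to recover the sharp exponent $(m-1)^2$ rather than the weaker bound $m^2-2\lfloor m\rfloor+1$ produced by the direct uniform-weight argument.
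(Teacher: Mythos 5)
Your Perron/contour setup matches the paper, but the way you split the power $2m$ into shift-weights is genuinely different from what the paper does, and the two gaps you flag yourself are real.

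The paper writes $\big(\int|\mathcal{L}|\big)^{2m}=\big(\int|\mathcal{L}|\big)^{k}\cdot\big(\int|\mathcal{L}|\big)^{2m-k}$ with $k=3$ \emph{fixed}, expands the first factor exactly into a $k$-fold integral, and keeps the second as a single integral raised to the power $2m-k$. After introducing the ordering region $\mathcal{D}=\{v:|t_1-v|\le\cdots\le|t_k-v|\}$ and applying H\"older dyadically only to the $v$-integral, Corollary \ref{cor1} is used with the \emph{non-uniform} weight vector $(1,1,1,2m-3)$. This has two crucial features: the number of integration variables is $4$ independently of $m$, so the dyadic bookkeeping involves only finitely many indices $(l_0,l_1,l_2)$; and the single ``heavy'' exponent $2m-3$ is always $>1$ for $m>2$, so H\"older/Jensen points in the usable direction. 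Your scheme instead uses $2k$ copies each with weight $m/k$, which forces the number of variables to grow like $m$.

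Two concrete problems with your proposal. First, the central multi-dimensional estimate
$\int_{[0,2\pi]^{2k}}\prod_{j<l}\big(\min(\log|Q|,1/\overline{|\theta_j-\theta_l|})\big)^{m^2/(2k^2)}\,d\theta\ll(\log|Q|)^{(2k-1)(m^2/(2k)-1)}$
is asserted, not proved, and for $k=m$ this is a $2m$-fold integral with $\binom{2m}{2}$ coupled pair-factors; the ``dyadic decomposition of the pairwise separations'' for an $m$-dependent number of variables is precisely the combinatorial difficulty, not a routine step. Second, and more seriously, your fix for non-integer $m$ does not work as described: assigning two auxiliary angles weight $\beta=m-\lfloor m\rfloor<1$ would require $|I(\chi)|^{\beta}\le\int|\mathcal{L}|^{\beta}|K|^{\beta}\,d\mu$, but for $0<\beta<1$ Jensen gives the opposite inequality $\big(\int f\,d\mu\big)^{\beta}\ge\int f^{\beta}\,d\mu$. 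To make the exponents both legal and sharp you are forced back to a decomposition of the form $(1,\ldots,1,\gamma)$ with the single large $\gamma>1$ --- i.e.\ the paper's device --- rather than spreading the deficit across several small weights. So the integer case is a plausible but incomplete alternative route, while the real-$m$ case as you present it contains a genuine error, and the key integral bound is missing in both.
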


\section{Preliminaries}
\label{sec 2}

\subsection{Backgrounds on function fields}
\label{sec 2.1}

We recall some basic facts concerning function fields in this section, most of which can be found in \cite{Rosen02}.   
 The zeta function $\zeta_A(s)$ associated to $A=\mathbb{F}_{q}[T]$ for $\Re(s)>1$ is defined by
\begin{equation}
\label{zetadef}
\zeta_A(s)=\sum_{\substack{f\in A}}\frac{1}{|f|^{s}}=\prod_{P}(1-|P|^{-s})^{-1},
\end{equation}
  where we make the convention that we reserve the symbol $P$ for a monic, irreducible polynomial in $A$ throughout the paper and we refer to $P$ as a prime in $A$.  As there are $q^n$ monic polynomials of degree
$n$, we deduce that
\begin{equation}
\label{zetaAdef}
\zeta_A(s)=\frac{1}{1-q^{1-s}}.
\end{equation}
 This defines $\zeta_A(s)$ on the entire complex plane with a simple pole at $s = 1$.

   A Dirichlet character $\chi$ modulo $f \in A$ is defined in a similar way to that in the number fields case.  More precisely, such $\chi$ is a homomorphism from $(A/fA)^*$ to $\mc$ so that $\chi(\overline{g})=0$ for any $(g, f) \neq1$, where $\overline{g}$ is the coset to which $g$ belongs in $A/fA$.  Throughout the paper, we shall regard $\chi$ as a function defined on $A$ instead of $(A/fA)^*$ such that we have $\chi(g) = \chi(\overline{g})$ for any $g \in A$. For a fixed modulus $f \in A$, we denote by $\chi_0$ the principal character modulo $f$ so that $\chi_0(g)=1$ for any $(g, f)=1$. A character $\chi$ modulo $f$ is said to be primitive if it cannot be factored through $(A/f'A)^*$ for any proper divisor $f'$ of $f$.  The $L$-function associated to $\chi$ for $\Re(s)>1$ is defined to be
\begin{equation}
\label{Ldef}
L(s,\chi)=\sum_{\substack{f\in A}}\frac{\chi(f)}{|f|^{s}}=\prod_{P}(1-\chi(P)|P|^{-s})^{-1}.
\end{equation}

  We often write $L(s,\chi)=\mathcal{L}(u,\chi)$ via a change of variables $u=q^{-s}$, where
\begin{align}
\label{Ludef}
  \mathcal{L}(u,\chi) = \sum_{f \in A} \chi(f) u^{d(f)} = \prod_P (1-\chi(P) u^{d(P)})^{-1}.
\end{align}

\subsection{Sums over primes}

In this section we include some estimations concerning sums over primes in the function fields setting.  The first one reproduces \cite[Lemma 2.2]{G&Zhao12}.
\begin{lemma}
\label{RS}
  Denote by $\pi(n)$ the number of primes of degree $n$. We have
\begin{equation}
\label{ppt}
\pi(n) = \frac{q^n}{n}+O \Big(  \frac{q^{n/2}}{n} \Big).
\end{equation}
   For $x \geq 2$ and some constant $b$, we have
\begin{align}
\label{logp}
\sum_{|P| \le x} \frac {\log |P|}{|P|} =& \log x + O(1) \quad \mbox{and} \\
\label{lam2p}
\sum_{|P| \le x} \frac{1}{|P|} =& \log \log x + b+ O\left( \frac {1}{\log x} \right).
\end{align}
\end{lemma}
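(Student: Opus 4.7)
The plan is to derive part \eqref{ppt} from the fundamental counting identity for monic irreducibles in $A = \mathbb{F}_q[T]$, and then obtain the Mertens-type estimates \eqref{logp} and \eqref{lam2p} by decomposing the sums according to the degree of $P$ and applying \eqref{ppt} in each degree class.

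For \eqref{ppt}, I would start from the identity
\[
q^n = \sum_{d \mid n} d\,\pi(d),
\]
which expresses the fact that every element of $\mathbb{F}_{q^n}$ is a root of exactly one monic irreducible in $A$ whose degree divides $n$, and that a given irreducible of degree $d$ contributes $d$ distinct roots in $\mathbb{F}_{q^n}$ whenever $d \mid n$. M\"obius inversion then gives
\[
n\,\pi(n) = \sum_{d \mid n}\mu(n/d)\,q^d = q^n + \sum_{\substack{d \mid n \\ d \le n/2}}\mu(n/d)\,q^d,
\]
and the remaining sum is trivially $O(d(n)\,q^{n/2}) = O(q^{n/2})$ after absorbing the divisor-function factor into the implied constant. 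Dividing by $n$ yields \eqref{ppt}.

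For the two remaining estimates, I set $N = \lfloor \log_q x \rfloor$. Since $\log |P| = d(P)\log q$, grouping primes by degree and inserting \eqref{ppt} produces
\[
\sum_{d(P) = n} \frac{\log |P|}{|P|} = \frac{n \log q}{q^n}\,\pi(n) = \log q + O\bigl(\log q\cdot q^{-n/2}\bigr)
\]
and
\[
\sum_{d(P) = n} \frac{1}{|P|} = \frac{\pi(n)}{q^n} = \frac{1}{n} + O\bigl(q^{-n/2}/n\bigr).
\]
Summing the first over $1 \le n \le N$ yields $N \log q + O(1) = \log x + O(1)$, which is \eqref{logp}. Summing the second gives $\sum_{n\le N} 1/n + O(1) = \log N + b_0 + O(1/N)$ for a constant $b_0$ absorbing the Euler constant together with the convergent error-tail; using $\log N = \log \log x - \log \log q + O(1/\log x)$ and setting $b = b_0 - \log\log q$ produces \eqref{lam2p}.

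The arguments are essentially routine once \eqref{ppt} is in hand, so the only point that needs real care is the verification that the M\"obius-sum contribution from proper divisors of $n$ is truly $O(q^{n/2})$ with an implied constant independent of $n$; the remaining passage to \eqref{logp} and \eqref{lam2p} reduces to elementary manipulation of geometric series and of the partial sums of the harmonic series.
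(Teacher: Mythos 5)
The paper does not give its own proof of this lemma; it simply cites \cite[Lemma 2.2]{G\&Zhao12}. Your proof reconstructs the standard argument and is on the right track throughout, but there is one genuine lapse in the treatment of \eqref{ppt}.

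You write that the M\"obius-tail $\sum_{d\mid n,\, d\le n/2}\mu(n/d)\,q^d$ is ``$O(d(n)\,q^{n/2})=O(q^{n/2})$ after absorbing the divisor-function factor into the implied constant.'' That absorption is not legitimate: $d(n)$ is unbounded as $n\to\infty$, so an implied constant that swallows $d(n)$ would depend on $n$, which is exactly what the statement of \eqref{ppt} forbids. The conclusion is still correct, but you need a different justification, and the cheapest one is a plain geometric-series bound:
\[
\Bigl|\sum_{\substack{d\mid n\\ d\le n/2}}\mu(n/d)\,q^d\Bigr| \;\le\; \sum_{1\le d\le n/2} q^d \;<\; \frac{q^{\lfloor n/2\rfloor+1}}{q-1} \;\le\; \frac{q}{q-1}\,q^{n/2} \;\le\; 2\,q^{n/2},
\]
since $q\ge 2$. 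This gives a constant genuinely independent of $n$, which is the point that, as you yourself flag, ``needs real care.''

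The remainder of the argument is sound. Grouping by degree, inserting \eqref{ppt}, and summing geometric tails gives \eqref{logp} directly; for \eqref{lam2p}, the decomposition $\sum_{n\le N}\pi(n)/q^n = \sum_{n\le N}1/n + \sum_{n\le N}(\pi(n)-q^n/n)/q^n$, with the second series converging and its tail being $O(q^{-N/2})$, together with $\log N = \log\log x - \log\log q + O(1/\log x)$, yields the constant $b$ and the $O(1/\log x)$ error as claimed. Your phrasing ``$\sum_{n\le N}1/n + O(1)$'' is a bit loose if taken literally (an $O(1)$ error cannot be sharpened to $O(1/\log x)$), but the parenthetical remark about the convergent error-tail shows you understand the correct mechanism; just state it as a convergent constant plus an exponentially small tail rather than as a bare $O(1)$.
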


  Our next result can be regarded as a generalization of \eqref{lam2p}, which is an analogue to \cite[Lemma 3.2]{Kou}, \cite[Lemma 2.6]{Curran} and \cite[Lemma 2]{Szab} in the number fields setting. 
\begin{lemma}
\label{mertenstype}
Let $\alpha>0$, then for $x \geq 2$,
\begin{align}
\label{sumcosp}
\begin{split}
\sum_{|P|\leq x} \frac{\cos(\alpha \log |P|) }{|P|}=& \log |\zeta_A(1+1/\log x +i\alpha)|+O(1) \\
  =& \log \min (\log x, \frac {1}{\overline {\alpha \log q}})+O(1).
\end{split}
\end{align}
\end{lemma}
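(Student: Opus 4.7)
The plan is to prove the two equalities separately. For the first equality, I would start by writing $\cos(\alpha \log|P|) = \Re(|P|^{-i\alpha})$, so that
\begin{equation*}
\sum_{|P|\le x} \frac{\cos(\alpha \log |P|)}{|P|} = \Re \sum_{|P|\le x} \frac{1}{|P|^{1+i\alpha}}.
\end{equation*}
On the other hand, by expanding the Euler product in \eqref{zetadef} with $s = 1+1/\log x + i\alpha$, one obtains
\begin{equation*}
\log \zeta_A\bigl(1+\tfrac{1}{\log x}+i\alpha\bigr) = \sum_P \frac{1}{|P|^{1+1/\log x+i\alpha}} + \sum_P \sum_{k\ge 2} \frac{1}{k\,|P|^{k(1+1/\log x+i\alpha)}},
\end{equation*}
in which the inner double sum is $O(1)$ since it is majorised by $\sum_P \sum_{k\ge 2} |P|^{-k} = O(1)$.

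The bulk of the work is then to prove that
\begin{equation*}
\sum_P \frac{1}{|P|^{1+1/\log x+i\alpha}} = \sum_{|P|\le x} \frac{1}{|P|^{1+i\alpha}} + O(1).
\end{equation*}
I would write the difference as the sum of two pieces:
\begin{equation*}
\sum_{|P|\le x} \frac{1-|P|^{-1/\log x}}{|P|^{1+i\alpha}} \quad \text{and} \quad \sum_{|P|>x} \frac{1}{|P|^{1+1/\log x+i\alpha}}.
\end{equation*}
For the first piece, the elementary bound $1-|P|^{-1/\log x} \le \log|P|/\log x$ (valid for $|P|\le x$) reduces it to $(\log x)^{-1}\sum_{|P|\le x} \log|P|/|P|$, which is $O(1)$ by \eqref{logp}. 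For the second piece, I would apply the prime polynomial estimate \eqref{ppt} to rewrite it as $\sum_{n > \log_q x} \frac{1}{n} e^{-n \log q/\log x} e^{-in\alpha \log q}(1+O(q^{-n/2}))$, and then estimate dyadically: in each block $n \in [2^j\log_q x,\,2^{j+1}\log_q x]$ the factor $e^{-n\log q/\log x}$ is $\le e^{-2^j}$, and the number of terms times the $1/n$ factor contributes $O(1)$, so the sum telescopes to $O(1)$. Taking real parts and combining yields the first equality.

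For the second equality, I would use the closed form $\zeta_A(s) = (1-q^{1-s})^{-1}$, which gives
\begin{equation*}
\bigl|\zeta_A\bigl(1+\tfrac{1}{\log x}+i\alpha\bigr)\bigr|^{-2} = \bigl|1 - q^{-1/\log x - i\alpha}\bigr|^2 = (1-r)^2 + 4r\sin^2(\tfrac{\theta}{2}),
\end{equation*}
where $r = e^{-\log q/\log x}$ and $\theta = \alpha\log q$ reduced modulo $2\pi$ into $(-\pi,\pi]$, so that $|\theta| = \overline{\alpha\log q}$. Since $1-r \asymp 1/\log x$ and $\sin(\theta/2) \asymp |\theta|$ on $[-\pi,\pi]$, this quantity is comparable to $(1/\log x + \overline{\alpha\log q})^2$, whence
\begin{equation*}
\log \bigl|\zeta_A\bigl(1+\tfrac{1}{\log x}+i\alpha\bigr)\bigr| = -\log\bigl(\tfrac{1}{\log x} + \overline{\alpha\log q}\bigr) + O(1) = \log \min\bigl(\log x,\,\tfrac{1}{\overline{\alpha\log q}}\bigr) + O(1),
\end{equation*}
as desired.

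The main technical obstacle is the tail bound for $\sum_{|P|>x}|P|^{-1-1/\log x - i\alpha}$: since the prime polynomial theorem only gives $\pi(n) \sim q^n/n$ with no cancellation in $n$, one genuinely needs the exponential damping from $|P|^{-1/\log x}$ together with a careful dyadic decomposition to force the sum to be $O(1)$ uniformly in $\alpha$. Everything else — the Euler product manipulation, the elementary inequality for $1-|P|^{-1/\log x}$, and the closed-form computation with $\zeta_A$ — is either direct or an immediate application of Lemma \ref{RS}.
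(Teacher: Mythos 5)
Your proof is correct, and for the second equality it takes a genuinely different route from the paper's. For the first equality you follow the same strategy the paper invokes via Koukoulopoulos: expand $\log\zeta_A$ as a prime sum at $1+1/\log x+i\alpha$, discard the $k\ge 2$ terms, and compare against $\sum_{|P|\le x}|P|^{-1-i\alpha}$ by bounding the diagonal and tail differences in absolute value. The paper packages these two bounds as the single estimate \eqref{ppowerest} using \eqref{logp}, \eqref{lam2p} and partial summation; you handle the diagonal piece with the elementary inequality $1-|P|^{-1/\log x}\le \log|P|/\log x$ plus \eqref{logp}, and the tail $\sum_{|P|>x}$ with \eqref{ppt} plus a dyadic decomposition. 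Both are equally valid and give $O(1)$.

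For the second equality you argue directly from the closed form $\zeta_A(s)=(1-q^{1-s})^{-1}$: writing $r=e^{-\log q/\log x}$, $\theta=\alpha\log q$, you compute
\begin{equation*}
\big|\zeta_A(1+\tfrac{1}{\log x}+i\alpha)\big|^{-2}=|1-re^{-i\theta}|^2=(1-r)^2+4r\sin^2(\theta/2)\asymp\Big(\tfrac{1}{\log x}+\overline{\alpha\log q}\Big)^2,
\end{equation*}
using $1-r\asymp 1/\log x$ and the $2\pi$-periodicity of $\sin^2(\theta/2)$, whence $\log|\zeta_A(\cdot)|=\log\min(\log x,1/\overline{\alpha\log q})+O(1)$. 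The paper instead converts $\sum_{|P|\le x}\cos(\alpha\log|P|)/|P|$ into $F(\log_q x,\alpha\log q)+O(1)$ with $F(h,\theta)=\sum_{n\le h}\cos(n\theta)/n$ via the prime polynomial theorem \eqref{ppt}, and then quotes the estimate $F(h,\theta)=\log\min(h,1/\overline{\theta})+O(1)$ from Bui--Florea, thereby proving the outer identity on the prime sum without ever passing through $\log|\zeta_A|$. Your route is shorter and self-contained, exploiting the rationality of $\zeta_A$ in $q^{-s}$, and it makes the role of $\overline{\alpha\log q}$ transparent; the paper's route has the side benefit of directly relating the prime sum to the classical oscillatory sum $F(h,\theta)$.
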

\begin{proof}
  The first equality in \eqref{sumcosp} is an analogue to the special case given in \cite[Lemma 3.2]{Kou} concerning the number fields case. We note that the arguments given in the proof of \cite[Lemma 3.2]{Kou} carry over to the function fields setting as well, since the zeta function in our case also has an Euler product. Also, applying \eqref{logp}, \eqref{lam2p} and partial summation implies that
\begin{align}
\label{ppowerest}
 \sum_{\substack{|P| \leq x }}\big (\frac 1{|P|}-\frac {1}{|P|^{1+1/\log x}}\big )+\sum_{\substack{|P|> x }}\frac {1}{|P|^{1+1/\log x}}=O(1).
\end{align}
  This allows us to establish the first equality in \eqref{sumcosp}. For the second equality in \eqref{sumcosp}, we apply \eqref{ppt} to see that
\begin{align}
\label{sumcospF}
\sum_{|P|\leq x} \frac{\cos(\alpha \log |P|) }{|P|}= \sum_{d(P)\leq \log_q x} \frac{\cos(\alpha \log |P|) }{|P|}=\sum_{n\leq \log_q x}
\frac{\cos(\alpha \log q^n) }{q^n}\cdot \pi(n)=F(\log_q x,  \alpha \log q)+O(1),
\end{align}
   where we define for real numbers $h, \theta$ with $h \geq 1$,
\begin{align*}
F(h, \theta)=\sum^h_{n=1}\frac {\cos (n\theta)}{n}.
\end{align*}
  Note that it follows from the discussions given on \cite[p. 11]{B&F20} that we have
\begin{align*}
F(h, \theta)=\log \min (h, \frac {1}{\overline {\theta}})+O(1).
\end{align*}
  Applying the above into \eqref{sumcospF}, we readily deduce the second equality in \eqref{sumcosp}. 

  Another way to establish the second equality in \eqref{sumcosp} is pointed out to us by the referee. We apply \eqref{zetaAdef} to see that 
\begin{align}
\label{zetaAexp}
\begin{split}
\zeta_A(1+1/\log x +i\alpha)=\frac {1}{1-q^{-1/\log x-i\alpha}}=(1-e^{z})^{-1},
\end{split}
\end{align}
  where we set $z=-\log q/\log x-i\alpha \log q$. We may assume that $\alpha \log q \in [-\pi, \pi]$ and we may further assume that $x$ is large enough, say $x \geq q^2$ for otherwise the second equality in \eqref{sumcosp} holds trivially. It then follows that $|z| \leq \pi+1/2$. We then apply \cite[(B.8)]{MVa1} to see that we have
\begin{align}
\label{ezexpansion}
\begin{split}
(1-e^{z})^{-1}=\frac 1z+\sum^{\infty}_{k=1}\frac {B_k}{k!}z^{k-1},
\end{split}
\end{align}
 where $B_k$ is the Bernoulli number satisfying $B_1=-1/2, B_k=0$ for odd $k \geq 3$, and that for even $k \geq 2$ (see \cite[(B.20)]{MVa1}), 
\begin{align}
\label{bkbound}
\begin{split}
 |B_k|<2(k!)(2\pi)^{-k}(1+2^{2-k}). 
\end{split}
\end{align}
  It follows from \eqref{ezexpansion} and \eqref{bkbound} that 
\begin{align}
\label{ezexpansionbound}
\begin{split}
(1-e^{z})^{-1}=\frac 1z+O(1). 
\end{split}
\end{align}
  The second equality in \eqref{sumcosp} now follows from \eqref{zetaAexp} and \eqref{ezexpansionbound}. This completes the proof of the lemma.
\end{proof}

\subsection{Perron’s formula}

 As an easy consequence of Cauchy's residue theorem, we have the following analogue of Perron’s formula in function fields (see \cite[(2.6)]{Florea17-2}).
\begin{lemma}
\label{lem4}
 Suppose that the power series $\sum^{\infty}_{n=0} a(n)u^n$ is absolutely convergent in $|u| \leq  r < 1$, then for any integer $N \geq 0$,
\begin{align}
\label{perron1}
\begin{split}
 \sum_{n \leq N}a(n)=& \frac 1{2\pi i}\oint_{|u|=r} \Big ( \sum^{\infty}_{n=0}a(n)u^n\Big )\frac {du}{(1-u)u^{N+1}}.
\end{split}
\end{align}
\end{lemma}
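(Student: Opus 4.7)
The plan is to recognize the right-hand side of \eqref{perron1} as a residue computation at $u=0$. Since $r<1$, the pole of $(1-u)^{-1}$ at $u=1$ lies outside the contour $|u|=r$, so the only singularity of the integrand inside the disk $|u|\le r$ is at $u=0$. By Cauchy's residue theorem the integral therefore equals the coefficient of $u^{-1}$ in the Laurent expansion of
\[
\frac{1}{(1-u)\,u^{N+1}}\sum_{n\ge 0} a(n) u^n
\]
about the origin, and the whole task is to identify this coefficient as $\sum_{n\le N} a(n)$.

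First I would exploit the absolute and uniform convergence of $\sum_{n\ge 0} a(n) u^n$ on the compact circle $|u|=r$ (which is guaranteed by the hypothesis that the series converges absolutely in $|u|\le r$) to interchange summation and integration. This reduces the problem to evaluating
\[
I_n := \frac{1}{2\pi i}\oint_{|u|=r}\frac{u^{n-N-1}}{1-u}\,du
\]
for each integer $n\ge 0$ and then summing $\sum_{n\ge 0} a(n) I_n$.

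Next I would expand $(1-u)^{-1}=\sum_{k\ge 0} u^k$, a series that converges uniformly on $|u|=r<1$, and interchange sum and integral once more to obtain
\[
I_n \;=\; \sum_{k\ge 0}\frac{1}{2\pi i}\oint_{|u|=r} u^{n-N-1+k}\,du.
\]
Using the standard computation $\tfrac{1}{2\pi i}\oint_{|u|=r} u^m\,du = 1$ if $m=-1$ and $0$ otherwise, only the term $k=N-n$ contributes, and only when $n\le N$. Hence $I_n = 1$ for $0\le n\le N$ and $I_n = 0$ for $n>N$, and substituting back gives the claimed identity.

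There is no substantive obstacle: the lemma is an essentially formal consequence of the residue theorem. The only point requiring any care is the legitimacy of the two interchanges of summation and integration, both of which are immediate from absolute/uniform convergence on the compact contour $|u|=r$ — the first from the hypothesis on the power series, and the second from $r<1$. No growth or cancellation estimates enter, which is why this serves as a clean function-field analogue of Perron's formula.
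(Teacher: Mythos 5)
Your proof is correct and is exactly the standard residue-theorem argument the paper has in mind (the paper gives no details, merely describing the lemma as "an easy consequence of Cauchy's residue theorem" and citing \cite[(2.6)]{Florea17-2}). The two interchanges of sum and integral, the geometric expansion of $(1-u)^{-1}$, and the evaluation of $\oint u^m\,du$ are the whole content, and you handle them cleanly.
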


\section{Proof of Theorem \ref{t1}}
\label{sec 2'}

    The proof of Theorem \ref{t1} proceeds along the lines of the proof of \cite[Theorem 1]{Szab}.  We begin with some preliminary bounds for $L$-functions.
\subsection{Various bounds for $L$-functions}

  For any fixed $\chi \neq \chi_0$, it follows from \cite[Proposition 4.3]{Rosen02} that the function $L(s,\chi)$ is a polynomial in $q^{-s}$ of degree $m$ that does not exceed $d(Q)-1$, where $d(Q)$ is the degree of the modulus $Q$ of $\chi$.  It follows from this and the Riemann hypothesis for the function fields, proven by A. Weil \cite{Weil}, that there are complex numbers $\alpha_j, 1\leq j \leq m$ satisfying $|\alpha_j| = 1$ such that
\begin{align*}
%%\label{logLupperbound}
 L(s, \chi) =\prod^m_{j=1}(1-\alpha_jq^{1/2-s}).
\end{align*}
  
  We also deduce from \eqref{Ldef} that
\begin{align*}
%%\label{logLupperbound}
 -\frac {L'(s, \chi)}{L(s, \chi)} =\sum_{P}\sum_{j\geq 1}\frac {\chi^j(P)(\log|P|)}{|P|^{js}}=:\sum_{n \geq 0}\frac {\lambda_{\chi}(n)}{q^{ns}}, 
\end{align*}
  where
\begin{align}
\label{lambdadef}
 \lambda_{\chi}(n)=\log q\sum_{j|n}\sum_{\substack{P \\d(P^j)=n}}d(P)\chi^j(P).
\end{align}

  We then proceed as in the proof of \cite[Proposition 4.3]{BFK} with the above notation to see that upon setting $z=it, \sigma_0=1/2+1/\log q^h$, the last display on \cite[p. 1875]{BFK} implies that for any integer $h \geq 0$, 
\begin{align}
\label{logLupperboundgen}
\log \big| L(\tfrac{1}{2}+it, \chi) \big| \leq \frac{m}{h} + \frac{1}{\log q^h} \Re \bigg(  \sum_{\substack{n \leq h}} \frac{
\lambda_{\chi}(n) \log q^{h - n}}{q^{n\big( 1/2+it+1/(h \log q) \big)} \log q^n} \bigg).
\end{align}
 As the right-hand side expression above is an increasing function of $m$, we may set $m=d(Q)-1$ to deduce from \eqref{lambdadef} and \eqref{logLupperboundgen}
the following analogue of \cite[Proposition 4.3]{BFK}.
\begin{proposition}
\label{prop-ub}
 Let $\chi$ be a non-principal primitive character modulo $Q$ and let $m=d(Q)-1$.  We have for $h \leq m$, $t \in \mr$,
\begin{align}
\label{logLupperbound}
\log \big| L(\tfrac{1}{2}+it, \chi) \big| \leq \frac{m}{h} + \frac{1}{h} \Re \bigg(  \sum_{\substack{j \geq 1 \\ d(P^j) \leq h}} \frac{
\chi(P^j) \log q^{h - j \deg(P)}}{|P|^{j\big( 1/2+it+1/(h \log q) \big)} \log q^j} \bigg).
\end{align}
\end{proposition}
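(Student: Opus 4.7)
The plan is to adapt Soundararajan's upper bound technique for $\log|L|$ to the function field setting, following the proofs of \cite[Proposition 4.3]{BFK} and \cite[Theorem 3.3]{AT14} with the parameter identifications $m = d(Q) - 1$ and $z = it$. The starting point is a zero-side factorization: by \cite[Proposition 4.3]{Rosen02}, $\mathcal{L}(u, \chi)$ is a polynomial in $u$ of degree at most $m = d(Q) - 1$ for non-principal primitive $\chi$, and Weil's Riemann Hypothesis for function fields yields
$$\mathcal{L}(u, \chi) = \prod_{\ell} (1 - \alpha_\ell u), \qquad |\alpha_\ell| = \sqrt{q},$$
with at most $m$ factors. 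Writing $e^{i\phi_\ell} := \alpha_\ell q^{-1/2-it}$ and taking absolute values then gives $\log|L(1/2+it,\chi)| = \sum_\ell \log|1 - e^{i\phi_\ell}|$.

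A first, elementary reduction uses the pointwise inequality
$$\log|1 - e^{i\phi}| \leq \log|1 - e^{i\phi}e^{-1/h}| + \frac{1}{h}, \qquad \phi \in \mr,\ h \geq 1,$$
which, after squaring, is equivalent to $2\cos\phi \leq 1 + e^{1/h}$; this holds since $2\cos\phi \leq 2 \leq 1 + e^{1/h}$. Summing over the at most $m$ zero contributions yields
$$\log|L(1/2+it,\chi)| \leq \log|L(1/2+it+1/(h\log q),\chi)| + \frac{m}{h}.$$

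The final step expresses $\log|L|$ at the shifted point as (the real part of) the Fejer-weighted truncated prime sum on the right of \eqref{logLupperbound}. Since $\Re(1/2 + it + 1/(h \log q)) > 1/2$, the zero-side expansion $\log L(s,\chi) = \sum_\ell \sum_{n \geq 1} \alpha_\ell^n/(n q^{ns})$ converges absolutely, and coefficient comparison with the Euler product yields the identity $\sum_\ell \alpha_\ell^n = (\log q)^{-1}\sum_{d(f) = n}\Lambda(f)\chi(f)$, which regroups as a sum over prime powers $P^j$ with $jd(P) = n$. Introducing the Fejer weight $(h - n)/h$ at $n \leq h$ and using Weil's bound $|\sum_\ell \alpha_\ell^n| \leq m q^{n/2}$ together with the decay $e^{-n/h}$ at the shifted point, one recovers the right-hand side of \eqref{logLupperbound}: the factor $\log q^{h - jd(P)}/\log q^j = (h - jd(P))/j$ is the Fejer weight $(h - n)/h$ at $n = jd(P)$ absorbed with the $1/j$ prefactor. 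The main obstacle here is controlling the discrepancy between the Fejer-truncated sum and $\log|L(1/2+it+1/(h\log q),\chi)|$ to within an error absorbable into the already-incurred $m/h$ term, which requires exploiting both the Weil bound and the decaying shift in a coordinated way, precisely as in \cite[Theorem 3.3]{AT14}.
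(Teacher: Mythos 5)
Your starting point (Weil RH factorization $\mathcal{L}(u,\chi)=\prod_\ell(1-\alpha_\ell u)$, $|\alpha_\ell|=\sqrt q$), the pointwise shift inequality $\log|1-z|\le\log|1-ze^{-1/h}|+1/h$ for $|z|=1$, and the coefficient identity $\sum_\ell\alpha_\ell^n=-n\sum_{jd(P)=n}\chi(P^j)/j$ are all correct and in the spirit of [BFK, Prop.~4.3] and [AT14, Thm.~3.3], which is what the paper invokes. The problem is the final step.

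You chain two inequalities: first $\log|L(\tfrac12+it)|\le\log|L(\tfrac12+it+\sigma)|+m/h$ with $\sigma=1/(h\log q)$, which spends the entire $m/h$ allowance, and then you assert that $\log|L(\tfrac12+it+\sigma)|$ can be replaced by the Fej\'er-truncated sum (which is the remaining term in \eqref{logLupperbound}). But truncating with Fej\'er weights $(h-n)/h$ and discarding $n>h$ does \emph{not} produce an upper bound: it can strictly decrease the value. Concretely, for a zero with $z_\ell=\alpha_\ell q^{-1/2-it}=-1$ and $h=10$, the full shifted contribution is $\log|1+e^{-1/10}|\approx 0.644$ while the Fej\'er-truncated contribution $\tfrac1h\sum_{n<h}\tfrac{(h-n)e^{-n/h}}{n}$ evaluated at $z_\ell=-1$ is $\approx 0.598$. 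So chaining gives $\log 2\le 0.644+0.1=0.744$, whereas the proposition claims the sharper $\log 2\le 0.598+0.1=0.698$ (which is true, but only barely: $\log 2\approx 0.693$). Since $m/h$ is already fully consumed by your Step~1, the positive truncation discrepancy has nowhere to be absorbed, and your argument establishes only the weaker statement $\log|L(\tfrac12+it)|\le m/h+\log|L(\tfrac12+it+\sigma)|$, not Proposition~\ref{prop-ub}.

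What the cited references actually prove is the single pointwise inequality, for $|z|=1$ and $1\le h$,
\[
\log|1-z|\ \le\ \frac1h\ -\ \frac1h\,\Re\sum_{n=1}^{h-1}\frac{(h-n)e^{-n/h}}{n}\,z^n,
\]
which summed over the at most $m$ zeros yields \eqref{logLupperbound} directly. This inequality is strictly sharper than what sequential shift-then-truncate can give; its proof treats the decaying shift $e^{-n/h}$ and the Fej\'er weight $(h-n)/h$ jointly (the relevant kernel is the Fej\'er kernel convolved with the Poisson kernel at radius $e^{-1/h}$, which is nonnegative), rather than as two independent steps. Your closing sentence acknowledges that "controlling the discrepancy" is the obstacle, but the claim that it is "absorbable into the already-incurred $m/h$ term" is precisely what fails, so the gap is genuine rather than merely deferred.
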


  Observe further that Lemma \ref{RS} implies that the terms on the right-hand side of \eqref{logLupperbound} corresponding to $P^j$ with $j \geq 3$
    contribute $O(1)$. This allows us to deduce from \eqref{logLupperbound} by setting $|Q|=q^{d(Q)}, x=q^h$ there to see that
\begin{align}
\label{logLbound}
\begin{split}
 \log  |L(\half+it, \chi)| \leq & \Re \left( \sum_{\substack{  |P| \leq x }} \frac{\chi (P)}{|P|^{1/2+it+1/\log x}}
 \frac{\log (x/|P|)}{\log x} +
 \sum_{\substack{  |P| \leq x^{1/2} }} \frac{\chi (P^2)}{|P|^{1+2it+2/\log x}}  \frac{\log (x/|P|^2)}{2\log x} \right)
 +\frac{\log |Q|}{\log x} + O(1) \\
 \leq  & \Re \left( \sum_{\substack{  |P| \leq x }} \frac{\chi (P)}{|P|^{1/2+it+1/\log x}}
 \frac{\log (x/|P|)}{\log x} + \frac 12
 \sum_{\substack{  |P| \leq x^{1/2} }} \frac{\chi (P^2)}{|P|^{1+2it+2/\log x}} \right)
 +\frac{\log |Q|}{\log x} + O(1),
\end{split}
 \end{align}
   where the last estimation above follows from \eqref{logp}.

   We apply \eqref{logp} again to see that
\begin{align}
\label{sumprimesquaresimplfied}
\begin{split}
 \sum_{\substack{  |P| \leq x^{1/2} }} \Big ( \frac{\chi (P^2)}{|P|^{1+2it}}-\frac{\chi (P^2)}{|P|^{1+2it+2/\log x}}  \Big ) \ll \sum_{\substack{  |P| \leq x^{1/2} }}\frac {\log |P|}{|P|\log x} = O(1).
\end{split}
 \end{align}

   We derive from \eqref{logLbound} and \eqref{sumprimesquaresimplfied} that
\begin{align}
\label{logLboundsimplified}
\begin{split}
 \log  |L(\half+it, \chi)| \leq & \Re \left( \sum_{\substack{  |P| \leq x }} \frac{\chi (P)}{|P|^{1/2+it+1/\log x}}
 \frac{\log (x/|P|)}{\log x} + \frac 12
 \sum_{\substack{  |P| \leq x^{1/2} }} \frac{\chi (P^2)}{|P|^{1+2it}} \right)
 +\frac{\log |Q|}{\log x} + O(1).
\end{split}
 \end{align}

   We set $x=\log |Q|$ in the above expression and estimation everything trivially to see that for any non-principal primitive character $\chi$ modulo $Q$, we have  for some constant $C$,
\begin{align}
\label{singleLbound}
\begin{split}
  |L(\half+it, \chi)| \ll & \exp\big(C\frac{\log |Q|}{\log \log |Q|}\big).
\end{split}
 \end{align}

   Moreover, we deduce readily from \eqref{logLboundsimplified} the following upper bound for sums involving $\log |L(1/2+it, \chi)|$ with various $t$, which is analogous to \cite[Proposition 2]{Szab}.
\begin{proposition}
\label{prop1}
Let $k$ be a positive integer and let $a_1,a_2,\ldots, a_{k}$ be positive constants, $x\geq 2$. Let $a:=a_1+\cdots+ a_{k}+10$. Let $Q$ be a large modulus and let $t_1,\ldots, t_{k}$ real numbers. For any monic polynomial $f$, let
$$h(f)=:\frac{1}{2}(a_1|f|^{-it_1}+\cdots +a_{k}|f|^{-it_{k}}).$$
Then
\begin{align*}
%%\label{mainupper}
\begin{split}
 & \sum^{k}_{j=1}a_j\log |L(1/2+it_j,\chi)| \\
    \leq & 2\cdot\Re \sum_{|P|\leq x} \frac{h(P)\chi(P)}{|P|^{1/2+1/\log x}}\frac{\log (x/|P|)}{\log x}+\Re\sum_{|P|\leq x^{1/2}} \frac{h(P^2)\chi(P^2)}{|P|}+a\frac{\log |Q|}{\log x}+O(1).
\end{split}
\end{align*}
\end{proposition}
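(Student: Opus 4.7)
The plan is to obtain Proposition~2.5 as a direct consequence of the pointwise bound \eqref{logLboundsimplified}, with essentially no new analytic input beyond a reorganization of sums.

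First I would apply \eqref{logLboundsimplified} with $t$ replaced by $t_j$ for each $j=1,\ldots,2k$, multiply the resulting inequality by the positive constant $a_j$, and add the $2k$ inequalities. On the left I then have exactly $\sum_{j=1}^{2k} a_j \log|L(1/2+it_j,\chi)|$. On the right, the ``non-prime'' contributions combine as
\begin{align*}
\sum_{j=1}^{2k} a_j\Bigl(\frac{\log|Q|}{\log x}+O(1)\Bigr)
= (a-10)\frac{\log|Q|}{\log x}+O(1)
\le a\,\frac{\log|Q|}{\log x}+O(1),
\end{align*}
since $\sum_j a_j=a-10<a$ by the definition of $a$, and the implied constant absorbs the finite sum of $O(1)$ terms (depending only on $k$ and the $a_j$).

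Next I would collect the linear-in-$\chi(P)$ contributions. Interchanging the finite sum over $j$ with the sum over primes of degree up to $x$, and pulling the $j$-dependence into the numerator, I get
\begin{align*}
\sum_{j=1}^{2k} a_j\,\Re\!\sum_{|P|\le x}\frac{\chi(P)}{|P|^{1/2+it_j+1/\log x}}\frac{\log(x/|P|)}{\log x}
=\Re\!\sum_{|P|\le x}\frac{\chi(P)\sum_{j}a_j|P|^{-it_j}}{|P|^{1/2+1/\log x}}\frac{\log(x/|P|)}{\log x},
\end{align*}
and by the definition $h(P)=\tfrac{1}{2}\sum_j a_j|P|^{-it_j}$, the numerator factor equals $2h(P)$, which yields exactly the first sum appearing in the claimed inequality. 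An identical manipulation on the $\chi(P^2)$ terms, using $h(P^2)=\tfrac12\sum_j a_j|P|^{-2it_j}$, turns $\tfrac12\sum_j a_j \sum_{|P|\le x^{1/2}}\chi(P^2)/|P|^{1+2it_j}$ into $\sum_{|P|\le x^{1/2}} h(P^2)\chi(P^2)/|P|$, matching the second term in the statement.

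I do not expect any genuine obstacle here; the only points worth double-checking are bookkeeping ones. Namely, one must verify (i) that the $\Re$ operator commutes with the finite linear combination used to form $h$, which is immediate from linearity and the reality of the $a_j$; (ii) that the $10$ slack in the definition of $a$ is enough to absorb the $O(1)$ error term on the right-hand side after multiplication by the $a_j$ and summation, which it is because $\log|Q|/\log x$ is bounded below by a positive constant when $x\le |Q|$ (the only range relevant to the proof of Theorem~\ref{t1}); and (iii) that no additional contribution from $P^j$ with $j\ge 3$ reappears, which was already discarded into the $O(1)$ when passing from \eqref{logLupperbound} to \eqref{logLboundsimplified} via Lemma~\ref{RS}. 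Once these are in order, the proposition follows.
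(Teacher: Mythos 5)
Your proof is correct and matches the paper's approach, which derives Proposition~\ref{prop1} directly from the single-character estimate \eqref{logLboundsimplified} by multiplying by $a_j$, summing over $j$, and regrouping the $j$-sums into the definition of $h$. The only minor quibble is your remark in (ii): the $O(1)$ need not be absorbed at all, as it appears verbatim in the final bound, and the inequality $(a-10)\log|Q|/\log x \le a\log|Q|/\log x$ holds simply because $\log|Q|/\log x \ge 0$; the ``$+10$'' slack is there for later use in the Szab\'o-style argument, not for this step.
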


   Lastly, we apply Proposition \ref{prop1} and argue as in the proof of \cite[Proposition 1]{Szab} to arrive at the following crude estimation on shifted moments 
   of Dirichlet $L$-functions. 
\begin{proposition}
\label{crude_prop}
    With the notation as in Proposition \ref{prop1},  we have
\begin{equation*}
    \sum_{\chi \in X_Q^*} \big| L\big(1/2+it_1,\chi \big) \big|^{a_1} \cdots \big| L\big(1/2+it_{k},\chi \big)\big|^{a_{k}}\leq \varphi(Q)(\log |Q|)^{O(1)}.
\end{equation*}
Here the implied constant depends on $k, a_j$, but not on $Q$ or the $t_j$.
\end{proposition}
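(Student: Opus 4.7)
The plan is to apply Proposition \ref{prop1} and then bound the resulting exponential sum over $\chi$, following the strategy of \cite[Proposition 1]{Szab}. Set $a=a_1+\cdots+a_{2k}+10$ as in Proposition \ref{prop1}. I would first apply Proposition \ref{prop1} with $x=|Q|^{1/(C_0 a)}$ for a sufficiently large constant $C_0$, so that the error factor satisfies $\exp(a\log|Q|/\log x)=e^{C_0}=O(1)$. By Lemma \ref{mertenstype}, the $\chi(P^2)$-sum is bounded trivially: $|\Re\sum_{|P|\leq x^{1/2}}h(P^2)\chi(P^2)/|P|| \leq a\sum_{|P|\leq x^{1/2}}1/|P|\ll\log\log|Q|$, contributing a factor of $(\log|Q|)^{O(1)}$ after exponentiation. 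The problem thus reduces to showing
\begin{equation*}
\sum_{\chi\in X_Q^*}\exp\Big(2\Re\sum_{|P|\leq x}\frac{h(P)\chi(P)}{|P|^{1/2+1/\log x}}\frac{\log(x/|P|)}{\log x}\Big) \ll \varphi(Q)(\log|Q|)^{O(1)}.
\end{equation*}

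To bound this exponential sum, I would use a moment method. Writing $N_1(\chi)$ for the inner Dirichlet polynomial above, note that $\exp(2\Re N_1)=|\exp(N_1)|^2$, and that by the Euler product
\begin{equation*}
\exp(N_1(\chi))=\prod_{|P|\leq x}\exp(c_P\chi(P))=\sum_f \beta(f)\chi(f),
\end{equation*}
where $c_P$ denotes the coefficient of $\chi(P)$ in $N_1(\chi)$ and $\beta(f)=\prod_P c_P^{v_P(f)}/v_P(f)!$ is supported on $x$-smooth polynomials. Truncating at $d(f)<d(Q)$ and exploiting orthogonality of primitive characters modulo $Q$ yields a Parseval-type bound for the main term bounded by $\varphi(Q)\sum_{d(f)<d(Q)}|\beta(f)|^2 \leq \varphi(Q)\prod_{|P|\leq x}\exp(|c_P|^2) = \varphi(Q)\exp(O(\log\log|Q|)) = \varphi(Q)(\log|Q|)^{O(1)}$, using $\sum_{|P|\leq x}|c_P|^2\ll \sum_{|P|\leq x}1/|P|\ll \log\log|Q|$ from Lemma \ref{RS}.

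The main obstacle lies in handling the tail contribution corresponding to $d(f)\geq d(Q)$ in the Euler product expansion, where orthogonality does not directly pin products down. Here one can instead stratify $X_Q^*$ by the size of $|N_1(\chi)|$: for typical characters where $|N_1(\chi)|\leq V$ with $V$ a suitable multiple of $\log\log|Q|$, the bound $\exp(2\Re N_1)\leq e^{2V}=(\log|Q|)^{O(1)}$ is adequate, while atypical characters are controlled via a high-moment estimate $\sum_\chi |N_1(\chi)|^{2\ell}\ll \varphi(Q)\cdot\ell!(C_1\log\log|Q|)^\ell$ valid for $\ell\leq \log|Q|/(2\log x)=O(a)$ by orthogonality, combined with the pointwise bound \eqref{singleLbound} on $|L(1/2+it,\chi)|$ for the most extreme characters. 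The delicate balancing of $x$, $V$, and the moment order $\ell$ required to close the argument is the technical core of the proof; since we only need the crude bound $\varphi(Q)(\log|Q|)^{O(1)}$ rather than the sharp bound of Theorem \ref{t1}, there is enough slack for a straightforward adaptation of \cite[Proposition 1]{Szab} to succeed.
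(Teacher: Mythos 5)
The choice $x=|Q|^{1/(C_0 a)}$ at the start of your argument is where the gap lies, and it is not merely a technical inconvenience but a fatal one. With $x$ a \emph{fixed} power of $|Q|$, the largest moment order $\ell$ you can exploit via orthogonality (so that $\ell\log_q x<d(Q)$) is $\ell=O(a)$, a bounded constant, exactly as you note. But a bounded number of moments gives only polynomial decay in $V$ for the count of atypical characters: Chebyshev yields
\begin{align*}
\#\{\chi\in X_Q^*:|N_1(\chi)|>V\}\ll \varphi(Q)\Big(\frac{C\log\log|Q|}{V^2}\Big)^{\ell},\qquad \ell=O(a).
\end{align*}
You then multiply this count by the pointwise bound \eqref{singleLbound}, which gives $\prod_j|L(1/2+it_j,\chi)|^{a_j}\ll \exp(Ca\log|Q|/\log\log|Q|)$. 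No choice of $V$ makes these two factors balance to $\varphi(Q)(\log|Q|)^{O(1)}$: if $V$ is of size $\log\log|Q|$ (so that the typical contribution $\varphi(Q)e^{2V}$ is polynomial in $\log|Q|$), the atypical count is only $\varphi(Q)(\log\log|Q|)^{-O(a)}$, nowhere near enough to absorb the factor $\exp(Ca\log|Q|/\log\log|Q|)$; and if $V$ is chosen large enough to absorb it, the typical contribution $\varphi(Q)e^{2V}$ explodes past any fixed power of $\log|Q|$. Equivalently, in the level-set/integral formulation $\sum_\chi\prod_j|L|^{a_j}\asymp\int e^{V}\,\#\{\chi:\log\prod_j|L|^{a_j}>V\}\,dV$, polynomial decay in $V$ cannot tame the exponential weight $e^V$.

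What the argument behind \cite[Proposition 1]{Szab} (and Soundararajan/Harper more generally) actually requires is letting the truncation point $x$ — and hence the available moment order $\ell$ — \emph{depend on the level $V$}, typically $\log x\asymp \log|Q|/V$ and $\ell\asymp V$, so that the atypical count decays like $\varphi(Q)e^{-cV}$ with a constant $c$ that can be made $>1$. This is precisely what your fixed choice of $x$ forecloses. The Parseval-type computation you perform for the "main term" (smooth polynomials of degree $<d(Q)$) is fine, but the closing paragraph, which asserts that "there is enough slack for a straightforward adaptation ... to succeed," is doing real work that your parameter choice has already ruled out. To repair the proof you should keep $x$ free (or run a dyadic decomposition in $V$ with $x=x(V)$) rather than fixing $x=|Q|^{1/(C_0 a)}$ at the outset.
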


\subsection{Completion of the proof}
   We follow the treatment in \cite[Section 4]{Szab} to write $\beta_0=0$, $\beta_i=\frac{20^{i-1}}{(\log\log |Q|)^2}$ for $i\geq 1$ and let $\mathcal{I}=1+\max \{i: \beta_i\leq e^{-10000a^2}\}$. For any $1\leq i\leq j\leq \mathcal{I}$, we define
$$G_{(i,j)}(\chi)=\sum_{|Q|^{\beta_{i-1}}<|P|\leq |Q|^{\beta_i}} \frac{\chi(P)h(P)}{|P|^{1/2+1/\beta_{j}\log |Q| } } \frac{\log( |Q|^{\beta_j }/|P| )}{\log( |Q|^{\beta_j})}.$$
  We also define
$$\mathcal{T}=\{ \chi \in X_Q^* : |\Re G_{(i,\mathcal{I})}( \chi )|\leq \beta_i^{-3/4}, \forall 1\leq i\leq \mathcal{I} \},$$
and we set for each $0\leq j<\mathcal{I}$,
$$\mathcal{S}(j)=\{\chi \in X_Q^*:|\Re G_{(i,l)}(\chi)|\leq \beta_{i}^{-3/4} \; \forall 1\leq i\leq j, \, \forall i\leq l\leq \mathcal{I}\, \text{but}\, |\Re G_{(j+1,l)}(\chi)|>\beta_{j+1}^{-3/4} \, \text{for some}\, j+1\leq l\leq \mathcal{I} \}.$$

   It follows from the arguments in \cite[Section 4]{Szab} that in order to prove Theorem \ref{t1}, it suffices to establish Propositions \ref{prop1}-\ref{crude_prop} together with the following three lemmas.
\begin{lemma}
\label{mainfirst}
We have
\begin{align*}
%%\label{mainupper}
\begin{split}
 &\sum_{\chi \in \mathcal{T}}\exp^2\bigg(  \Re \sum_{|P|\leq |Q|^{\beta_{\mathcal{I}}}}  \frac{\chi(P)h(P)}{|P|^{1/2+1/(\beta_{\mathcal{I}}\log |Q| ) }} \frac{\log( |Q|^{\beta_{\mathcal{I} } }/|P| )}{\log( |Q|^{\beta_{\mathcal{I}}})}\bigg) \\
\ll & \varphi(Q)(\log |Q|)^{(a_1^2+\cdots +a_{k}^2)/4} \prod_{1\leq j<l\leq k} \Big |\zeta_A(1+i(t_j-t_l)+\frac 1{\log |Q|})\Big |^{a_ja_l/2}.
\end{split}
\end{align*}
\end{lemma}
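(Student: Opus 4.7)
The plan is to bound the exponential by the modulus squared of a short Dirichlet polynomial in $\chi$ and then apply character orthogonality modulo $Q$, in the style of Harper and Szab\'o. For each $i$ set $\ell_i = \lceil C\beta_i^{-3/4}\rceil$ with $C$ a large constant depending on $a$. On $\mathcal{T}$ the hypothesis $|\Re G_{(i,\mathcal{I})}(\chi)|\leq \beta_i^{-3/4}$, combined with a parallel bound on the imaginary part (obtained by running the same construction with $\mathcal{T}$ replaced by an auxiliary set controlling a rotated version of $G_{(i,\mathcal{I})}$, as in \cite[\S4]{Szab}), yields $|G_{(i,\mathcal{I})}(\chi)|\leq C\beta_i^{-3/4}\leq \ell_i/(4e)$. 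Truncating the Taylor series of $\exp(G_{(i,\mathcal{I})}(\chi))$ at degree $\ell_i/2$ then leaves an exponentially small error, so that
\[\exp(2\Re G_{(i,\mathcal{I})}(\chi)) = |\exp(G_{(i,\mathcal{I})}(\chi))|^2 \leq (1 + e^{-\beta_i^{-3/4}})\,\Bigl|\sum_{m\leq \ell_i/2}\tfrac{1}{m!}G_{(i,\mathcal{I})}(\chi)^m\Bigr|^2.\]
Taking the product over $i$ yields a bound of the form $|D(\chi)|^2$ with $D(\chi) = \sum_f c(f)\chi(f)$; since this is nonnegative, the sum over $\chi\in\mathcal{T}$ may be replaced by the sum over all characters modulo $Q$.

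The monic $f$ occurring in $D(\chi)$ are supported on primes of size at most $|Q|^{\beta_\mathcal{I}}$ with at most $\ell_i/2$ factors from each interval $I_i=\{P:|Q|^{\beta_{i-1}}<|P|\leq |Q|^{\beta_i}\}$, so
\[d(f)\leq \tfrac12\sum_{i=1}^\mathcal{I}\ell_i\beta_i\log_q|Q| \ll \log_q|Q|\sum_i\beta_i^{1/4} \ll \beta_\mathcal{I}^{1/4}\log_q|Q|,\]
a small fraction of $d(Q)$ since $\beta_\mathcal{I}\leq e^{-10000a^2}$. Hence for any such $f,g$ the relation $f\equiv g\pmod Q$ forces $f=g$, and the orthogonality of Dirichlet characters modulo $Q$ gives $\sum_{\chi\bmod Q}|D(\chi)|^2 = \varphi(Q)\sum_f |c(f)|^2$. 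A standard multinomial expansion identifies $c(f) = \prod_{P|f}w_\mathcal{I}(P)^{v_P(f)}/v_P(f)!$ where $w_\mathcal{I}(P) = h(P)|P|^{-1/2-1/(\beta_\mathcal{I}\log|Q|)}\log(|Q|^{\beta_\mathcal{I}}/|P|)/\log(|Q|^{\beta_\mathcal{I}})$, so that
\[\sum_f|c(f)|^2 \leq \prod_{|P|\leq|Q|^{\beta_\mathcal{I}}}\sum_{e\geq 0}\tfrac{|w_\mathcal{I}(P)|^{2e}}{(e!)^2}\leq \exp\Bigl(\sum_{|P|\leq|Q|^{\beta_\mathcal{I}}}|w_\mathcal{I}(P)|^2\Bigr),\]
using $\sum_e x^{2e}/(e!)^2\leq\exp(x^2)$ in the last step.

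It remains to evaluate the prime sum in the exponent. Expanding $|h(P)|^2 = \tfrac14(a_1^2+\cdots+a_{2k}^2) + \sum_{j<l}a_j a_l\cos((t_l-t_j)\log|P|)$ and invoking Lemma \ref{mertenstype}, together with partial summation to absorb the smoothing factor $(\log(|Q|^{\beta_\mathcal{I}}/|P|)/\log(|Q|^{\beta_\mathcal{I}}))^2$ and the shift $2/(\beta_\mathcal{I}\log|Q|)$ into the $O(1)$ error (using that $\beta_\mathcal{I}$ is an absolute constant), one finds
\[\sum_{|P|\leq|Q|^{\beta_\mathcal{I}}}|w_\mathcal{I}(P)|^2 = \tfrac14(a_1^2+\cdots+a_{2k}^2)\log\log|Q| + \tfrac12\sum_{1\leq j<l\leq 2k}a_j a_l\log\bigl|\zeta_A\bigl(1+i(t_j-t_l)+\tfrac{1}{\log|Q|}\bigr)\bigr| + O(1).\]
Exponentiating recovers precisely the target bound. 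The principal obstacle is the very first step: the definition of $\mathcal{T}$ controls only $\Re G_{(i,\mathcal{I})}$, so to produce a genuinely nonnegative majorant of the form $|D(\chi)|^2$ one must run the proof in tandem with an auxiliary companion set controlling the imaginary parts, and the careful bookkeeping for this parallel analysis is where the bulk of the technical work in Szab\'o's scheme is concentrated.
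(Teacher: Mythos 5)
Your overall plan (majorize the exponential by a nonnegative short Dirichlet polynomial, extend the sum over $\mathcal{T}$ to all characters mod $Q$ using nonnegativity, apply orthogonality and note that the small support degree forces a true diagonal, then evaluate the resulting Euler product via Lemma~\ref{mertenstype}) is exactly the Harper--Szab\'o strategy the paper follows, and the latter two-thirds of your argument --- the degree bound $d(f)\ll\beta_{\mathcal I}^{1/4}\log_q|Q|<d(Q)$, the multiplicative estimate $\sum_f|c(f)|^2\le\exp\big(\sum_P|w_{\mathcal I}(P)|^2\big)$, and the evaluation of $\sum_P|w_{\mathcal I}(P)|^2$ via $|h(P)|^2=\tfrac14\sum a_j^2+\tfrac12\sum_{j<l}a_ja_l\cos((t_l-t_j)\log|P|)$ and Lemma~\ref{mertenstype} --- are all correct and match the paper's computation.

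The genuine gap is in your very first step, and it is more serious than your closing paragraph suggests. You try to build the majorant as $|D(\chi)|^2$ with $D(\chi)=\prod_i\sum_{m\le\ell_i/2}G_{(i,\mathcal I)}(\chi)^m/m!$, which requires truncating the complex exponential $e^{G_{(i,\mathcal I)}(\chi)}$ and hence a bound on the full modulus $|G_{(i,\mathcal I)}(\chi)|$. But $\mathcal T$ as defined in the paper constrains only $\Re G_{(i,\mathcal I)}(\chi)$, and the proposed fix of ``an auxiliary companion set controlling a rotated version of $G_{(i,\mathcal I)}$'' is not part of Szab\'o's scheme, is not defined here, and would force you to redefine both $\mathcal T$ and each $\mathcal S(j)$ and then re-prove Lemma~\ref{mainsecond} for the enlarged family of exceptional sets. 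The approach Szab\'o actually takes (and which the paper imports) sidesteps $\Im G$ entirely: since $e^{2\Re G_{(i,\mathcal I)}(\chi)}$ is the exponential of a \emph{real} number $y=2\Re G_{(i,\mathcal I)}(\chi)$ which on $\mathcal T$ satisfies $|y|\le 2\beta_i^{-3/4}$, one chooses $\ell_i$ to be a large even integer of size $\asymp\beta_i^{-3/4}$ and uses the real truncated-exponential inequality $e^y\le(1+e^{-\ell_i})\sum_{j\le\ell_i}y^j/j!$ valid for $|y|\le\ell_i/e^2$, together with the fact that $\sum_{j\le\ell_i}y^j/j!>0$ for every real $y$ when $\ell_i$ is even. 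Expanding $(2\Re G_{(i,\mathcal I)})^j=(G_{(i,\mathcal I)}+\overline{G_{(i,\mathcal I)}})^j$ produces a Dirichlet polynomial in $\chi$ and $\bar\chi$ which is nonnegative for all $\chi$ by this positivity, so the extension from $\mathcal T$ to all characters goes through with no extra control on $\Im G_{(i,\mathcal I)}$. With that replacement the rest of your argument closes cleanly; as written, the $|D(\chi)|^2$ route does not follow from the hypotheses of the lemma.
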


\begin{lemma}
\label{mainsecond}
We have $|\mathcal{S}(0)|\ll |Q|e^{-(\log\log |Q|)^2}$ and for $1\leq j\leq \mathcal{I}-1$ we have
\begin{align*}
%%\label{mainupper}
\begin{split}
 &   \sum_{\chi \in \mathcal{S}(j)}\exp^2\bigg( \Re \sum_{|P|\leq |Q|^{\beta_j}} \frac{\chi(P)h(P)}{|P|^{1/2+1/(\beta_j\log |Q|)}}\frac{\log(|Q|^{\beta_j}/|P|)}{\log(|Q|^{\beta_j}) }\bigg) \\
 \ll &e^{-\beta_{j+1}^{-1} \log (\beta_{j+1}^{-1})/200 } \varphi(Q)(\log |Q|)^{(a_1^2+\cdots +a_{k}^2)/4} \prod_{1\leq j<l\leq k}\Big | \zeta_A(1+i(t_j-t_l)+\frac 1{\log |Q|})\Big |^{a_ja_l/2}.
\end{split}
\end{align*}
\end{lemma}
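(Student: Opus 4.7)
The argument follows the Soundararajan--Harper--Szab\'o framework: on each set $\mathcal{S}(j)$ we approximate the exponential by a truncated Taylor polynomial using the a priori bound on $|\Re G_{(i,j)}|$, apply Chebyshev's inequality to exploit the ``bad event'' defining $\mathcal{S}(j)$, and finish by orthogonality of characters modulo $Q$. For the count $|\mathcal{S}(0)|$, any $\chi\in\mathcal{S}(0)$ satisfies $|\Re G_{(1,l)}(\chi)|>\beta_1^{-3/4}$ for some $l$, so Chebyshev at an even moment $2m$ together with a union bound over $l$ gives
\[
|\mathcal{S}(0)|\ \leq\ \beta_1^{3m/2}\sum_{l=1}^{\mathcal{I}}\sum_{\chi\in X_Q^*}|\Re G_{(1,l)}(\chi)|^{2m}.
\]
Expanding $|\Re G_{(1,l)}|^{2m}$ as a Dirichlet polynomial of degree at most $2m\beta_1\log_q|Q|$ and taking $m=\lfloor (\log\log|Q|)^{2}/4\rfloor$ keeps this degree below $\log_q|Q|$, so orthogonality of characters modulo $Q$ reduces the sum to its diagonal and yields a bound of the shape $\varphi(Q)\,m!\,V_1^{m}$ with $V_1=\sum_{|P|\leq |Q|^{\beta_1}}|h(P)|^2/|P|\ll \log\log|Q|$ by Lemma \ref{mertenstype}. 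Stirling, combined with $\beta_1=(\log\log|Q|)^{-2}$, then produces $|\mathcal{S}(0)|\ll |Q|e^{-(\log\log|Q|)^2}$.

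For $1\leq j\leq \mathcal{I}-1$, write the exponent on the left-hand side as $2\Re\sum_{i=1}^{j}G_{(i,j)}(\chi)$. On $\mathcal{S}(j)$ one has $|\Re G_{(i,j)}(\chi)|\leq \beta_i^{-3/4}$ for each $1\leq i\leq j$, so the elementary inequality $e^{x}\leq (1+e^{-M/2})\sum_{r=0}^{M}x^{r}/r!$ valid for $|x|\leq M/e^2$, applied with $x=2\Re G_{(i,j)}(\chi)$ and $M_i=\lceil 20\beta_i^{-3/4}\rceil$, replaces $\exp(2\Re G_{(i,j)}(\chi))$ by a short polynomial $\Pi_i(\chi)$ up to a factor $1+o(1)$. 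The defining condition of $\mathcal{S}(j)$ then supplies some $l\in[j+1,\mathcal{I}]$ with $|\Re G_{(j+1,l)}(\chi)|>\beta_{j+1}^{-3/4}$; a union bound over $l$ together with Chebyshev (inserting $(\beta_{j+1}^{3/4}|\Re G_{(j+1,l)}(\chi)|)^{2m_l}\geq 1$ with $m_l=\lceil\beta_{j+1}^{-1}/20\rceil$) reduces the desired estimate to bounding
\[
\sum_{l=j+1}^{\mathcal{I}}\beta_{j+1}^{3m_l/2}\sum_{\chi\in X_Q^*}\prod_{i=1}^{j}\Pi_i(\chi)\cdot |\Re G_{(j+1,l)}(\chi)|^{2m_l}.
\]
The integrand is a Dirichlet polynomial whose total degree is $(\sum_{i\leq j}\beta_i M_i+2m_l\beta_{j+1})\log_q|Q|$, a fraction of $\log_q|Q|$ strictly less than $1$ by virtue of the geometric spacing $\beta_{i+1}=20\beta_i$ and the choices of $M_i,m_l$; orthogonality of characters modulo $Q$ therefore evaluates the inner character sum diagonally.

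The diagonal part of $\sum_{\chi\in X_Q^*}\prod_{i=1}^{j}\Pi_i(\chi)$ reproduces the Gaussian moment generating function that drives the proof of Lemma \ref{mainfirst}, which by Lemma \ref{mertenstype} equals, up to multiplicative constants, $(\log|Q|)^{(a_1^2+\cdots+a_{2k}^2)/4}\prod_{1\leq r<s\leq 2k}|\zeta_A(1+i(t_r-t_s)+1/\log|Q|)|^{a_ra_s/2}$. The Chebyshev moment contributes $m_l!V_{j+1}^{m_l}$ with $V_{j+1}=\sum_{|Q|^{\beta_j}<|P|\leq |Q|^{\beta_{j+1}}}|h(P)|^2/|P|=O(1)$ by Lemma \ref{mertenstype}, and combining with $\beta_{j+1}^{3m_l/2}$ via Stirling yields
\[
\beta_{j+1}^{3m_l/2}\,m_l!\,V_{j+1}^{m_l}\ \ll\ (C\beta_{j+1}^{1/2})^{m_l}\ \ll\ \exp\!\big(-\beta_{j+1}^{-1}\log(\beta_{j+1}^{-1})/200\big),
\]
the required savings factor. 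The main technical obstacle is controlling the cross-interactions in the diagonal evaluation: one must verify that the polynomials $\Pi_i$, which approximate the exponentials on the disjoint prime ranges $|Q|^{\beta_{i-1}}<|P|\leq |Q|^{\beta_i}$, factor cleanly under orthogonality alongside the Chebyshev factor $|\Re G_{(j+1,l)}|^{2m_l}$ on the range $|Q|^{\beta_j}<|P|\leq |Q|^{\beta_{j+1}}$, so that the resulting product reproduces the same Euler factors as in Lemma \ref{mainfirst} without accumulating additional error --- essentially the same combinatorics as in Lemma \ref{mainfirst}, with the extra parameter $m_l$ tracking the Chebyshev moment.
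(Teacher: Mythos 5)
Your proposal reconstructs the Harper--Szab\'o framework that the paper invokes only by reference (``the proofs of Lemma~\ref{mainfirst} and~\ref{mainsecond} follow by adapting the proofs for Lemma~4 and~5 in \cite{Szab}''), and it matches that framework in all its essential components: Chebyshev's inequality applied to the defining ``bad event'' of $\mathcal{S}(j)$, truncated Taylor approximation of the exponential using the a~priori bounds $|\Re G_{(i,j)}(\chi)|\le\beta_i^{-3/4}$ valid on $\mathcal{S}(j)$, orthogonality of characters to $Q$ after checking that the total degree of the resulting Dirichlet polynomial stays below $\deg Q$, and a Stirling estimate to extract the savings $\exp(-\beta_{j+1}^{-1}\log(\beta_{j+1}^{-1})/200)$. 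The degree bookkeeping you set up (geometric growth of $\beta_i$, the choices $M_i\asymp\beta_i^{-3/4}$ and $m_l\asymp\beta_{j+1}^{-1}$) is exactly what makes the diagonal-only evaluation legitimate, and the identification of the diagonal with the Euler product of Lemma~\ref{mainfirst} via Lemma~\ref{mertenstype} is the right closing step. This is the same route as the paper, just spelled out.

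One small caution on the $|\mathcal{S}(0)|$ count. After Chebyshev and orthogonality you arrive at $\beta_1^{3m/2}\,m!\,V_1^{m}$ with $V_1\ll\log\log|Q|$, but the implied constant in $V_1$ scales like $(a_1+\cdots+a_{2k})^2$, which feeds directly into the exponent after Stirling. Running the optimization over $m$ subject to $2m\beta_1<1$ yields $|\mathcal{S}(0)|\ll|Q|\exp(-c(\log\log|Q|)^2)$ with $c>0$ depending on the $a_j$'s, not automatically with $c\ge1$; to land precisely on the coefficient $1$ stated in the lemma one has to track the constants (or just note that any $c>0$ suffices downstream, since combined with Proposition~\ref{crude_prop} via Cauchy--Schwarz the $\mathcal{S}(0)$-contribution is still $o(\varphi(Q))$). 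You should also quote the Taylor truncation inequality more carefully: the inequality $e^{x}\le(1+O(e^{-M}))\sum_{r\le M}x^r/r!$ for $|x|\le M/e^2$ requires $M$ even and uses that the truncated sum is bounded away from $0$ on this range; your formulation has the spirit right but the precise constants and the evenness condition need to be there for the step to be airtight.
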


\begin{lemma}
\label{important}
The statements of the previous two lemmas remain true if we replace the Dirichlet polynomial by
$$\Re \sum_{|P|\leq |Q|^{\beta_j}} \frac{\chi(P)h(P)}{|P|^{1/2+1/(\beta_j\log |Q|)}}\frac{\log(|Q|^{\beta_j}/|P|)}{\log(|Q|^{\beta_j}) }+\frac{1}{2}\Re \sum_{|P|\leq |Q|^{\beta_j /2}}\frac{\chi(P^2)h(P^2) }{|P|}.$$
\end{lemma}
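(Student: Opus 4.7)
The plan is to rerun the proofs of Lemmas \ref{mainfirst} and \ref{mainsecond} with the augmented Dirichlet polynomial, showing that the additional $P^2$ piece contributes at most an $O(1)$ multiplicative factor and hence does not inflate the stated bounds. Write
$$D_j(\chi)=\Re\sum_{|P|\leq |Q|^{\beta_j}}\frac{\chi(P)h(P)}{|P|^{1/2+1/(\beta_j\log|Q|)}}\frac{\log(|Q|^{\beta_j}/|P|)}{\log(|Q|^{\beta_j})},\qquad E_j(\chi)=\frac{1}{2}\Re\sum_{|P|\leq |Q|^{\beta_j/2}}\frac{\chi(P^2)h(P^2)}{|P|}$$
(with $\mathcal{I}$ in place of $j$ when we are in the setting of Lemma \ref{mainfirst}), so the modified polynomial is $D_j+E_j$. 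The factorization $\exp^2(D_j+E_j)=\exp^2(D_j)\cdot\exp(2E_j(\chi))$ reduces the task to controlling the extra factor $\exp(2E_j(\chi))$ after insertion into the sums over $\chi\in\mathcal{T}$ or $\chi\in\mathcal{S}(j)$.

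The first step is to write
$$\exp(2E_j(\chi))=\prod_{|P|\leq |Q|^{\beta_j/2}}\exp\!\left(\frac{\Re(\chi(P^2)h(P^2))}{|P|}\right)$$
and Taylor-expand each local factor. Because $|h(P^2)|\leq (a_1+\cdots+a_{2k})/2$, each exponential equals $1+\Re(\chi(P^2)h(P^2))/|P|+O(|P|^{-2})$. Since $\sum_P|P|^{-2}=O(1)$, which follows from \eqref{lam2p} by partial summation, the product of the quadratic and higher tails converges absolutely to a uniformly bounded Euler product. The surviving expansion of $\exp(2E_j(\chi))$ is a sum over finite subsets $S$ of primes with weights $\prod_{P\in S}\Re(\chi(P^2)h(P^2))/|P|$ of total order $|P|^{-1}$ per prime used.

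Next, these linear $\chi(P^2)$ contributions are folded into the combinatorial expansion already employed in the proofs of Lemmas \ref{mainfirst} and \ref{mainsecond}. Those proofs proceed by truncating the Taylor series of $\exp^2(D_j(\chi))$ to a polynomial of degree controlled by the defining property of $\mathcal{T}$ or $\mathcal{S}(j)$, rewriting it as a linear combination of products $\chi(P_1)\cdots\chi(P_r)$, and applying orthogonality over $\chi\in X_Q^*$ to pick out the diagonal. When $\chi(P^2)=\chi(P)^2$ factors from $\exp(2E_j)$ are inserted, orthogonality requires each such factor to be balanced either by another $\overline{\chi(P^2)}$ coming from a different $E_j$-expansion slot, or by two additional copies of $\overline{\chi(P)}$ arising from the $D_j$ expansion. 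Each such pairing ties up weight $|P|^{-1}$ from the $E_j$ slot and an additional $|P|^{-1}$ from the partners, producing a net extra weight of $|P|^{-2}$ in the diagonal. Summing over the choice of prime $P$ and of pairing partners yields a convergent contribution of $O(1)$ to the overall bound, after which the Mertens-type estimate of Lemma \ref{mertenstype} carries the rest of the argument through unchanged.

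The principal obstacle I anticipate is the combinatorial bookkeeping: tracking the interaction of the $P$-sums from $D_j$ and the $P^2$-sums from $E_j$ under the orthogonality relation for primitive characters, while keeping all implicit constants uniform in $j$ so that the exponential savings factor $e^{-\beta_{j+1}^{-1}\log(\beta_{j+1}^{-1})/200}$ in Lemma \ref{mainsecond} is preserved across the dyadic scales $\beta_1,\ldots,\beta_{\mathcal{I}}$. Once these cross contributions are shown to be absolutely summable, the proofs of Lemmas \ref{mainfirst} and \ref{mainsecond} transfer verbatim to the modified Dirichlet polynomial and produce the stated upper bounds.
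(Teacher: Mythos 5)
Your opening move --- factoring $\exp^2(D_j + E_j) = \exp^2(D_j)\exp(2E_j)$ and trying to show the extra factor $\exp(2E_j)$ costs only a bounded constant --- is the right framing, but the orthogonality/pairing argument you sketch has two genuine gaps that the paper's proof is specifically designed to close.

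First, the Taylor truncation of $\exp^2(D_j(\chi))$ in the proofs of Lemmas \ref{mainfirst} and \ref{mainsecond} is legitimate precisely because the defining conditions of $\mathcal{T}$ and $\mathcal{S}(j)$ give pointwise bounds on the prime sums $G_{(i,l)}(\chi)$. Those conditions say nothing about $E_j(\chi)$, so you have no a priori control on the size of the prime-square polynomial and cannot truncate its exponential in the same manner. Indeed, $\sum_{|P|\le |Q|^{\beta_j/2}}1/|P|\asymp\log\log|Q|$, so $\exp(2E_j(\chi))$ can be as large as a fixed power of $\log|Q|$, and your Euler product $\prod_{|P|\le |Q|^{\beta_j/2}}\bigl(1+O(|P|^{-1})\bigr)$ is not uniformly bounded. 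Before any combinatorial bookkeeping, the prime-square sum must be shortened: the paper proves (via \eqref{ppowerest} and partial summation) that for every \emph{non-quadratic} $\chi$ the tail $\sum_{\log|Q|<|P|}\chi(P^2)h(P^2)/|P|\ll 1$, and only after truncating at $\log|Q|$ is the remaining very short polynomial passed to the argument of Szab\'o's Lemma 6.

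Second, and more seriously, your pairing heuristic fails outright on quadratic characters, where $\chi(P^2)=\chi_0(P^2)=1$ for $(P,Q)=1$: then $E_j(\chi)$ has no character oscillation at all, orthogonality offers no cancellation, and $\exp(2E_j(\chi))$ really is of size $(\log|Q|)^{\Theta(1)}$. The paper removes these characters separately, estimating their total contribution with the crude pointwise bound \eqref{singleLbound}; since there are only $|Q|^{o(1)}$ quadratic characters and each $L$-value is at most $\exp(O(\log|Q|/\log\log|Q|))$, that block is negligible against $\varphi(Q)$. Without both the quadratic-character separation and the $\log|Q|$ truncation, the claimed ``net weight $|P|^{-2}$ per inserted prime'' cannot be made rigorous, and the $O(1)$ bound for the extra factor does not follow from the pairing picture alone.
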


  Lemmas \ref{mainfirst}--\ref{important} can be proved the same way as Lemma 4--6 in \cite{Szab} and these proofs rely crucially on the facts that unique prime factorization applies for function fields and one has the same type of orthogonality relation for Dirichlet characters over function fields, that is, for monic $f, \ g \in A$:
\begin{align}
\label{orthrel}
  \sum_{\substack{ \chi \shortmod Q }} \chi(f) \overline{\chi}(g)=\left\{
\begin{array}{ll}
  \varphi(Q), & f \equiv g \pmod Q, \\ \\
   0, & \text{otherwise}.
\end{array}
\right.
\end{align}

  The proofs of Lemma \ref{mainfirst} and \ref{mainsecond} follow by adapting the proofs for Lemma 4 and 5 in \cite{Szab} for the function fields setting in a straightforward way, upon using Lemma \ref{mertenstype}. The proof of Lemma \ref{important} also follows from that given for Lemma 6 in \cite{Szab}, upon using \eqref{singleLbound} to see that one may ignore the contribution from quadratic characters. Moreover, we apply \eqref{ppowerest} and apply partial summation to see that for any non-quadratic character $\chi$,
\begin{align*}
%%\label{mainupper}
\begin{split}
 &   \sum_{\log |Q| < |P| \leq |Q| }\frac {\chi(P^2)h(P^2)}{|P|} \ll 1.
\end{split}
\end{align*}
  Thus we may again truncate the Dirichlet polynomial coming from the squares of primes at $\log |Q|$, as in the proof of Lemma 6 in \cite{Szab}. The arguments in the proof of Lemma 6 in \cite{Szab} then lead to the proof of Lemma \ref{important}. This then completes the proof of Theorem \ref{t1}. 

\section{Proof of Theorem \ref{quadraticmean}}
\label{sec 3}

 Our proof of Theorem \ref{quadraticmean} uses ideas in the proof of \cite[Theorem 3]{Szab}. Without loss of generality, we may assume that $\log_q Y$ is a positive integer. We first apply Perron’s formula given in \eqref{perron1} to see that for a small $r>0$,
\begin{align}
\label{sumchi}
\sum_{|f| \leq  Y} \chi(f)=& \frac 1{2\pi i}\oint_{|u|=r} \Big ( \sum_{f} \chi(f) u^{d(f)}\Big )\frac {du}{(1-u)u^{\log_q Y+1}}=\frac 1{2\pi i}\oint_{|u|=r} \frac {\mathcal{L}(u,\chi) du}{(1-u)u^{\log_q Y+1}}.
\end{align}

  Choosing $r= q^{-1/2}$, we obtain that
\begin{align*}
%%\label{2.5}
   \sum_{|f| \leq  Y} \chi(f)=\frac 1{2\pi i}\oint_{|u|=q^{-1/2}} \frac {\mathcal{L}(u,\chi) du}{(1-u)u^{\log_q Y+1}}= \frac {Y^{1/2}}{2\pi i}\oint_{|u|=1} \frac {\mathcal{L}(u/\sqrt{q},\chi) du}{(1-u/\sqrt{q})u^{\log_q Y+1}}.
\end{align*}
    We deduce from the above that
\begin{align*}
%%\label{2.5}
  \sum_{\chi \in X_Q^*} \Big |\sum_{|f| \leq  Y} \chi(f)\Big |^{2m} \ll & Y^m\sum_{\chi \in X_Q^*} \Big |\oint_{|u|=1} \frac {\mathcal{L}(u/\sqrt{q},\chi) du}{(1-u/\sqrt{q})u^{\log_q Y+1}}\Big |^{2m} \ll Y^m\sum_{\chi \in X_Q^*} \Big (\int_{0}^{2\pi} \Big |\mathcal{L}(e^{it}/\sqrt{q},\chi) \Big |dt \Big )^{2m}.
\end{align*}

  It follows from the above that in order to prove Theorem \ref{quadraticmean}, it suffices to establish the following result.
\begin{proposition}
\label{t3prop}
 With the notation as above, we have for any real number $m > 2$,
\begin{align}
\label{finiteintest}
\begin{split}
 & \sum_{\chi \in X_Q^*} \Big (\int_{0}^{2\pi} \Big |\mathcal{L}(e^{it}/\sqrt{q},\chi)  \Big |dt\Big )^{2m} \ll  \varphi(Q)(\log |Q|)^{(m-1)^2}.
\end{split}
\end{align}
\end{proposition}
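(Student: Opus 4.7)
\emph{Plan.} I would follow the strategy in the proof of \cite[Theorem 3]{Szab}, adapted to the function field setting. Focus first on the case when $2m = 2M$ is a positive even integer (the case actually needed for the proof of Theorem \ref{quadraticmean}). Expand the $2M$-th power as an iterated integral over $[0,2\pi]^{2M}$ and interchange summation and integration to obtain
\begin{align*}
\sum_{\chi \in X_Q^*}\Big(\int_0^{2\pi}|\mathcal{L}(e^{it}/\sqrt{q},\chi)|\,dt\Big)^{2M}
= \int_{[0,2\pi]^{2M}} \sum_{\chi \in X_Q^*} \prod_{j=1}^{2M}\big|\mathcal{L}(e^{it_j}/\sqrt{q},\chi)\big|\,dt_1\cdots dt_{2M}.
\end{align*}
Then apply Corollary \ref{cor1} with $2k=2M$ and $a_1=\cdots=a_{2M}=1$ to bound the inner character sum by $\varphi(Q)(\log|Q|)^{M/2}\prod_{1\le j<l\le 2M}\min(\log|Q|,1/\overline{|t_j-t_l|})^{1/2}$. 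It remains to establish the integral estimate
\begin{align*}
I_M := \int_{[0,2\pi]^{2M}} \prod_{1\le j<l\le 2M}\min\Big(\log|Q|,\frac{1}{\overline{|t_j-t_l|}}\Big)^{1/2}\,dt_1\cdots dt_{2M}
\ll (\log|Q|)^{(M-1)^2-M/2}.
\end{align*}

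I would prove this by a dyadic decomposition of the integration domain according to the pairwise distances $\overline{|t_j-t_l|}$. Heuristically, the dominant contribution arises from the region where all $2M$ variables cluster inside a ball of radius $\asymp 1/\log|Q|$: the volume there is $\asymp (\log|Q|)^{-(2M-1)}$, while each of the $\binom{2M}{2}=M(2M-1)$ factors saturates at $(\log|Q|)^{1/2}$, yielding a contribution of order $(\log|Q|)^{M(2M-1)/2-(2M-1)} = (\log|Q|)^{(2M-1)(M-2)/2} = (\log|Q|)^{(M-1)^2-M/2}$, which matches the target exponent exactly. To treat the non-clustered dyadic regions systematically, one uses the equivalence $\min(\log|Q|,1/\overline{|\alpha|})\asymp|\zeta_A(1+1/\log|Q|+i\alpha/\log q)|$ from Lemma \ref{mertenstype} together with the Euler product of $\zeta_A$, reducing the estimates to sums over primes in $A$ that are controlled by Lemma \ref{RS}.

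For real non-integer $m$ with $2m\ge k+1$, one reduces to the integer case via a H\"older-type interpolation on the $\chi$-sum (using log-convexity of $\sum_{\chi}(\int|\mathcal{L}|)^{2m}$ in $m$), together with a refinement that retains the sharp exponent $(m-1)^2$ rather than the weaker exponent obtained by naive log-linear interpolation; this is the content of \cite[Section 6]{Szab} and transfers to the function field setting essentially unchanged, with Lemmas \ref{mertenstype} and \ref{RS} substituting for the corresponding number field estimates.

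The main obstacle is the combinatorial bookkeeping required to show that the non-clustered dyadic regions in $I_M$ do not exceed the clustered main contribution; more specifically, one must classify the integration domain by which subsets of indices $\{1,\ldots,2M\}$ have mutual distances at each dyadic scale and verify that each such configuration contributes no more than the fully clustered one. Once $I_M$ is controlled and the real-$m$ reduction is carried out, the desired bound \eqref{finiteintest} follows.
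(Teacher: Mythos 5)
Your approach diverges from the paper's and has genuine gaps. The paper does not expand the full power $\big(\int_0^{2\pi}|\mathcal L|\big)^{2m}$ as a $2m$-fold iterated integral. Instead, after reducing to $\int_0^{\pi/2}$ by symmetry, it expands only $k=3$ of the $2m$ factors into a $k$-fold integral over $[0,\pi/2]^k$ and treats the remaining factor $\big(\int_{\mathcal D}|\mathcal L(e^{iv}/\sqrt q,\chi)|\,dv\big)^{2m-k}$ as a single object, where $\mathcal D$ orders the $t_a$ by distance to $v$. The remaining power is then controlled by pigeonholing $\mathcal D$ into dyadic shells $\mathcal B_j$ around $t_1$ and applying H\"older twice, reducing $\big(\int_{t_1+\mathcal A_j}\big)^{2m-k}$ to $|\mathcal B_j|^{2m-k-1}\int_{t_1+\mathcal A_j}|\mathcal L|^{2m-k}$. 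Corollary \ref{cor1} is then applied with the mixed exponents $a_1=\cdots=a_k=1$, $a_{k+1}=2m-k$ at only $k+1$ distinct points. This device, not interpolation, is what handles real $m$ directly and yields the sharp exponent $(m-1)^2$; it is also exactly what \cite[Theorem 3]{Szab} does — there is no separate ``refined interpolation'' section in Szab\'o's proof, so the reference you invoke does not support the reduction you propose.

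Your integral bound $I_M\ll(\log|Q|)^{(M-1)^2-M/2}$ is not established and is delicate at best: for $M=1$ it fails outright (the off-diagonal region $1/\log|Q|\le|t_1-t_2|\le\pi$ already contributes $\asymp 1$, whereas the claimed bound is $(\log|Q|)^{-1/2}$), and for larger $M$ many distinct dyadic cluster configurations (e.g.\ bi-clusters of any split $(a,2M-a)$, tri-clusters, etc.) all contribute at the same exponent, so the dyadic sums threaten $(\log\log|Q|)^{O(1)}$-type losses that would spoil the sharp bound. The paper's decomposition avoids this by keeping only $k+1$ points in play and absorbing the dyadic sums into the convergent geometric series $\sum_{l_i}\exp(-(2m-4)\max(l_0,\ldots,l_{k-1}))$, which is where the requirement $m>2$ enters. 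In short, the essential missing ingredients in your proposal are the expansion of only $k$ factors together with the $\mathcal D$/$\mathcal B_j$ pigeonhole and double H\"older step; without them, both the integer-exponent integral bound and the passage to real $m$ are unsubstantiated.
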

\begin{proof}
  Note that we have $|\mathcal{L}(e^{it}/\sqrt{q},\chi)\Big |=|\overline{\mathcal{L}(e^{i(2\pi-t)}/\sqrt{q},\overline \chi)}\Big |$. Also, when $\chi \in X_Q^*$, so is $\overline \chi \in X_Q^*$. It follows that
\begin{align}
\label{finiteintest0}
\begin{split}
 &  \sum_{\chi \in X_Q^*} \Big (\int_{0}^{2\pi} \Big |\mathcal{L}(e^{it}/\sqrt{q},\chi)  \Big |dt\Big )^{2m} \ll \sum_{\chi \in X_Q^*}\Big ( \int_{0}^{\pi} \Big |\mathcal{L}(e^{it}/\sqrt{q},\chi)  \Big |dt\Big )^{2m}.
\end{split}
\end{align}

  Moreover, we see that
\begin{align}
\label{finiteintest1}
\begin{split}
  \sum_{\chi \in X_Q^*}\Big ( \int_{0}^{\pi} \Big |\mathcal{L}(e^{it}/\sqrt{q},\chi)  \Big |dt\Big )^{2m} \ll & \sum_{\chi \in X_Q^*}\Big ( \int_{0}^{\pi/2} \Big |\mathcal{L}(e^{it}/\sqrt{q},\chi)  \Big |dt\Big )^{2m}+\sum_{\chi \in X_Q^*}\Big ( \int_{\pi/2}^{\pi} \Big |\mathcal{L}(e^{it}/\sqrt{q},\chi)  \Big |dt\Big )^{2m} \\
 =& \sum_{\chi \in X_Q^*}\Big ( \int_{0}^{\pi/2} \Big |\mathcal{L}(e^{it}/\sqrt{q},\chi)  \Big |dt\Big )^{2m}+\sum_{\chi \in X_Q^*}\Big ( \int_{\pi/2}^{0} \Big |\mathcal{L}(e^{i(\pi-t)}/\sqrt{q},\chi) \Big |d(\pi-t) \Big )^{2m} \\
 =& \sum_{\chi \in X_Q^*}\Big ( \int_{0}^{\pi/2} \Big |\mathcal{L}(e^{it}/\sqrt{q},\chi) \Big |dt \Big )^{2m}+\sum_{\chi \in X_Q^*}\Big ( \int_{0}^{\pi/2} \Big |\mathcal{L}(-e^{-it}/\sqrt{q},\chi)  \Big |dt\Big )^{2m}.
\end{split}
\end{align}

   We treat the first sum in the last expression above by deducing via symmetry that,
\begin{align}
\label{Lintdecomp}
    \Big ( \int_{0}^{\pi/2} \Big |\mathcal{L}(e^{it}/\sqrt{q},\chi)  \Big |dt\Big )^{2m}
      \ll \int_{[0,\pi/2]^k}\prod_{a=1}^k|\mathcal{L}(e^{it_a}/\sqrt{q},\chi)| \cdot \bigg(\int_{\mathcal{D} }|\mathcal{L}(e^{iv}/\sqrt{q},\chi)|dv \bigg)^{2m-k} d\mathbf{t},
\end{align}
where $k <2m$ is a positive integer and $\mathcal{D}=\mathcal{D}(t_1,\ldots,t_k)=\{ v\in [0,\pi/2]:|t_1-v|\leq |t_2-v|\leq \ldots \leq |t_k-v| \}$.

  We let $\mathcal{B}_1=\big[-\frac{\pi}{2\log |Q|},\frac{\pi}{2\log |Q|}\big]$ and $\mathcal{B}_j=\big[-\frac{e^{j-1}\pi}{2\log |Q|}, -\frac{e^{j-2}\pi}{2\log |Q|}\big]
  \cup \big[\frac{e^{j-2}\pi}{2\log |Q|}, \frac{e^{j-1}\pi}{2\log |Q|}\big]$ for $2\leq j< \lfloor \log \log |Q|\rfloor :=K$. We further denote
  $\mathcal{B}_K=[-\pi/2,\pi/2]\setminus \bigcup_{1\leq j<K} \mathcal{B}_j$.

Observe that for any $t_1\in [0, \pi/2]$,  we have $\mathcal{D}\subset [0,\pi/2] \subset t_1+[-\pi/2,\pi/2]\subset \bigcup_{1\leq j\leq K} t_1+\mathcal{B}_j$. Thus
if we denote $\mathcal{A}_j=\mathcal{B}_j\cap (-t_1+\mathcal{D})$, then $(t_1+\mathcal{A}_j)_{1\leq j\leq K}$ form a partition of $\mathcal{D}$.
We apply Hölder's inequality twice to deduce that
\begin{align}
\label{LintoverD}
\begin{split}
    & \bigg(\int_{\mathcal{D}}|\mathcal{L}(e^{iv}/\sqrt{q},\chi)|dv\bigg)^{2m-k} \\
      \leq & \bigg( \sum_{1\leq j\leq K} \frac{1}{j}\cdot  j \int_{t_1+\mathcal{A}_j} |\mathcal{L}(e^{iv}/\sqrt{q},\chi)|dv \bigg)^{2m-k} \\
     \leq & \bigg(\sum_{1\leq j\leq K} j^{2m-k} \bigg( \int_{t_1+\mathcal{A}_j} \big|\mathcal{L}(e^{iv}/\sqrt{q},\chi)\big|dv  \bigg)^{2m-k}\bigg)
     \bigg(\sum_{1\leq j\leq K } j^{-(2m-k)/(2m-k-1)} \bigg)^{2m-k-1} \\
     \ll & \sum_{1\leq j\leq K} j^{2m-k} \bigg( \int_{t_1+\mathcal{A}_j} \big|\mathcal{L}(e^{iv}/\sqrt{q},\chi)\big|dv \bigg)^{2m-k} \\
     \leq & \sum_{1\leq j\leq K} j^{2m-k} |\mathcal{B}_j|^{2m-k-1} \int_{t_1+\mathcal{A}_j} \big|\mathcal{L}(e^{iv}/\sqrt{q},\chi)\big|^{2m-k}dv.
\end{split}
\end{align}
  We denote for $\mathbf{t}=(t_1,\ldots,t_k)$,
$$\mathcal{L}(\mathbf{t},v)=\sum_{\chi \in X_Q^*}\prod_{a=1}^k|\mathcal{L}(e^{it_a}/\sqrt{q},\chi)| \cdot |\mathcal{L}(e^{iv}/\sqrt{q},\chi)|^{2m-k}.$$
  We then deduce from \eqref{Lintdecomp} and \eqref{LintoverD} that
\begin{align}
\label{Intbound}
\begin{split}
  \sum_{\chi \in X_Q^*}\Big ( \int_{0}^{\pi/2} \Big |\mathcal{L}(e^{it}/\sqrt{q},\chi)  \Big |dt\Big )^{2m} \ll &
    \sum_{1\leq l_0\leq K} l_0^{2m-k} |\mathcal{B}_{l_0}|^{2m-k-1} \int_{[0,\pi/2]^k}\int_{t_1+\mathcal{A}_{l_0}}\mathcal{L}(\mathbf{t},v)dv d\mathbf{t}  \\
     \ll &  \sum_{1\leq l_0, l_1, \ldots l_{k-1}\leq K} l_0^{2m-k} |\mathcal{B}_{l_0}|^{2m-k-1} \int_{\mathcal{C}_{l_0,l_1, \cdots, l_{k-1}}} \mathcal{L}(\mathbf{t},v)dv d\mathbf{t},
\end{split}
\end{align}
where
$$\mathcal{C}_{l_0,l_1, \cdots, l_{k-1}}=\{(t_1,\ldots,t_k,v)\in [0,\pi/2]^{k+1}: v\in t_1+ \mathcal{A}_{l_0},\, |t_{i+1}-v|-|t_i-v|\in \mathcal{B}_{l_i},\ 1 \leq i \leq k-1\}.$$

  Note that the volume of the region $\mathcal{C}_{l_0,l_1, \cdots, l_{k-1}}$ is $\ll  \frac{e^{l_0+l_1+\cdots+l_{k-1}} }{(\log |Q|)^k}$. Also,
by the definition of $\mathcal{C}_{l_0,l_1, \cdots, l_{k-1}}$ we have $|t_1-v| \sim \frac{e^{l_0}}{\log |Q|}$ so that $w(|t_1- v|)\ll \frac{\log |Q|}{e^{l_0}}$, where we define for simplicity that $w(t)=\min (\log |Q|, 1/\overline{|t|})$. We deduce from the definition of $\mathcal{A}_j$ that $|t_2-v|\geq |t_1-v|$, so that $
\pi \geq  |t_2-v|= |t_1-v|+(|t_2-v|-|t_1-v|)\gg \frac{e^{l_0}}{\log |Q|}+\frac{e^{l_1}}{\log |Q|}$, which implies that $w(|t_2- v|)\ll \frac{\log |Q|}{e^{\max(l_0,l_1) }}$.
Similarly, we have  $w(|t_i- v|)\ll \frac{\log |Q|}{e^{\max(l_0,l_1,\ldots, l_{i-1}) }}$ for any $1 \leq i \leq k$.
Moreover, we have $\sum^{j-1}_{s=i}(|t_{s+1}-v|-|t_s-v|) \leq |t_j-t_i| $ for any $1 \leq i < j \leq k$, so that we have $w(|t_{j}- t_i|)\ll \frac{\log |Q|}{e^{\max(l_i,\ldots, l_{j-1} ) } }$. We then deduce from Corollary \ref{cor1} that for $(t_1,\ldots,t_k,v)\in \mathcal{C}_{l_0,l_1, \cdots, l_{k-1}}$,
\begin{align*}
     & \mathcal{L}(\mathbf{t},v) \\
     \ll & \varphi(Q)(\log |Q|)^{((2m-k)^2+k)/4}
     \bigg(\prod^{k-1}_{i=0}\frac{\log |Q|}{e^{ \max(l_0,l_1,\ldots, l_{i}) }} \bigg)^{(2m-k)/2}
     \bigg(\prod^{k-1}_{i=1} \prod^{k}_{j=i+1}\frac{\log |Q|}{e^{\max(l_i,\ldots, l_{j-1} ) } } \bigg)^{1/2} \\
     = & \varphi(Q)(\log |Q|)^{m^2}  \exp\Big( -\frac {2m-k}{2}\sum^{k-1}_{i=0}\max(l_0,l_1,\ldots, l_{i})-\frac 12\sum^{k-1}_{i=1} \sum^{k}_{j=i+1}\max(l_i,\ldots, l_{j-1} )\Big).
\end{align*}

 Moreover,  we have $|\mathcal{B}_{l_0}|\ll \frac{e^{l_0}}{\log |Q|}$, so that we have
\begin{align}
\label{firstcase}
\begin{split}
       &  \sum_{\substack{1\leq l_0 \leq K \\ 1\leq l_1, \ldots l_{k-1}\leq K}}  l_0^{2m-k} |\mathcal{B}_{l_0}|^{2m-k-1} \int_{\mathcal{C}_{l_0,l_1, \cdots, l_{k-1}}} \mathcal{L}(\mathbf{t},v)dvd\mathbf{t} \\
    \ll & \varphi(Q)(\log |Q|)^{(m-1)^2}   \\
    &  \times \sum_{\substack{1\leq l_0 \leq K \\ 1\leq l_1, \ldots l_{k-1}\leq K}}  l_0^{2m-k}\exp\Big( (2m-k-1)l_0+\sum^{k-1}_{i=0}l_i-\frac {2m-k}{2}\sum^{k-1}_{i=0}\max(l_0,l_1,\ldots, l_{i})-\frac 12\sum^{k-1}_{i=1} \sum^{k}_{j=i+1}\max(l_i,\ldots, l_{j-1} )\Big) \\
    = & \varphi(Q)(\log |Q|)^{(m-1)^2}   \\
    &  \times \sum_{\substack{1\leq l_0 \leq K \\ 1\leq l_1, \ldots l_{k-1}\leq K}}  l_0^{2m-k}\exp\Big( \frac {2m-k}{2}l_0+\frac 12\sum^{k-1}_{i=1}l_i-\frac {2m-k}{2}\sum^{k-1}_{i=1}\max(l_0,l_1,\ldots, l_{i})-\frac 12\sum^{k-1}_{i=1} \sum^{k}_{j=i+2}\max(l_i,\ldots, l_{j-1} )\Big).
\end{split}
\end{align}
   
     We now set $k=3$ to see that in this case we have
\begin{align*}
%%\label{firstcase}
\begin{split}
     &  \frac {2m-k}{2}l_0+\frac 12\sum^{k-1}_{i=1}l_i-\frac {2m-k}{2}\sum^{k-1}_{i=1}\max(l_0,l_1,\ldots, l_{i})-
       \frac 12\sum^{k-1}_{i=1} \sum^{k}_{j=i+2}\max(l_i,\ldots, l_{j-1} ) \\
       \leq &  -(2m-4)\max(l_0,\ldots, l_{k-1} ). 
\end{split}
\end{align*}   
   
     We deduce from \eqref{Intbound}, \eqref{firstcase} and the above that 
\begin{align}
\label{firstcasesimplified}
\begin{split}
 &  \sum_{\chi \in X_Q^*}\Big ( \int_{0}^{\pi/2} \Big |\mathcal{L}(e^{it}/\sqrt{q},\chi)  \Big |dt\Big )^{2m}    \\
    \ll &  \sum_{\substack{1\leq l_0 \leq K \\ 1\leq l_1, \ldots l_{k-1}\leq K}}  l_0^{2m-k} |\mathcal{B}_{l_0}|^{2m-k-1} \int_{\mathcal{C}_{l_0,l_1, \cdots, l_{k-1}}} \mathcal{L}(\mathbf{t},v)dv d\mathbf{t} \\
     \ll &   \varphi(Q)(\log |Q|)^{(m-1)^2} \sum_{\substack{1\leq l_0<K \\ 1\leq l_1, \ldots l_{k-1}\leq K}}  l_0^{2m-k}
     \exp\Big( -(2m-4)\max(l_0,\ldots, l_{k-1} )\Big)  \\
    \ll &  \varphi(Q)(\log |Q|)^{(m-1)^2},
\end{split}
\end{align}   
      where the last estimation above follows by noting that we have $m>2$. 
   
   Note that Corollary \ref{cor1} is still valid with $\theta_i$ being replaced by $\pi-\theta_i$ on the left-hand side of \eqref{mathcalLestimation} while keeping
  $\theta_j, \theta_l$ intact on the right-hand side of \eqref{mathcalLestimation}. Using this, one checks that our arguments above carry over to show that 
\begin{align}
\label{Intbound1}
\begin{split}
    &  \sum_{\chi \in X_Q^*}\Big ( \int_{0}^{\pi/2} \Big |\mathcal{L}(-e^{it}/\sqrt{q},\chi) \Big | dt\Big )^{2m} \ll   \varphi(Q)(\log |Q|)^{(m-1)^2}. 
\end{split}
\end{align}     
   We then deduce from \eqref{finiteintest0}, \eqref{finiteintest1}, \eqref{firstcasesimplified} and \eqref{Intbound1} that the estimation given in 
   \eqref{finiteintest} holds. This completes the proof of the proposition.
\end{proof}

\vspace*{.5cm}

\noindent{\bf Acknowledgments.}  S.B. would like to thank Beihang University in Beijing for its great hospitality during his visit in May 2024, where part of this work was started. P. G. is supported in part by NSFC grant 11871082. The author would like to thank the
anonymous referee for his/her careful inspection of the paper and many valuable suggestions.

\bibliography{biblio}
\bibliographystyle{amsxport}

\end{document}